\definecolor{gr}{rgb}   {0.,   0.69,   0.23 }
\definecolor{bl}{rgb}   {0.,   0.5,   1. }
\definecolor{mg}{rgb}   {0.85,  0.,    0.85}
\definecolor{yl}{rgb}   {0.8,  0.7,   0.}
\definecolor{or}{rgb}  {0.7,0.2,0.2}
\tikzset{
	ddot/.style={circle,fill=white,draw=black,inner sep=0pt,minimum size=0.8mm},
	>=stealth,
	}
\tikzset{
	ddot2/.style={circle,fill=black,draw=black,inner sep=0pt,minimum size=0.8mm},
	>=stealth,
	}
\newtheorem{theorem}{Theorem} [section]
\newtheorem{lemma}[theorem]{Lemma}
\newtheorem{proposition}[theorem]{Proposition}
\newtheorem{remark}[theorem]{Remark}
\newtheorem*{acknowledgment}{Acknowledgments}
\DeclareMathOperator*{\supp}{supp}
\newcommand{\J}{\mathcal{J}}
\newcommand{\noi}{\noindent}
\newcommand{\Z}{\mathbb{Z}}
\newcommand{\R}{\mathbb{R}}
\newcommand{\T}{\mathbb{T}}
\let\Im=\undefined\DeclareMathOperator*{\Im}{Im}
\let\P= \undefined
\newcommand{\P}{\mathbf{P}}
\newcommand{\E}{\mathbb{E}}
\newcommand{\F}{\mathcal{F}}
\newcommand{\al}{\alpha}
\newcommand{\be}{\beta}
\newcommand{\dl}{\delta}
\newcommand{\nb}{\nabla}
\newcommand{\deff}{\stackrel{\textup{def}}{=}}
\newcommand{\Dl}{\Delta}
\newcommand{\eps}{\varepsilon}
\newcommand{\kk}{\kappa}
\newcommand{\g}{\gamma}
\newcommand{\G}{\Gamma}
\newcommand{\ld}{\lambda}
\newcommand{\s}{\sigma}
\newcommand{\Si}{\Sigma}
\newcommand{\ft}{\widehat}
\newcommand{\wt}{\widetilde}
\newcommand{\cj}{\overline}
\newcommand{\dx}{\partial_x}
\newcommand{\dt}{\partial_t}
\newcommand{\ta}{\theta}
\renewcommand{\l}{\ell}
\renewcommand{\o}{\omega}
\renewcommand{\O}{\Omega}
\newcommand{\les}{\lesssim}
\newcommand{\ges}{\gtrsim}
\newcommand{\jb}[1]
{\langle #1 \rangle}
\newcommand{\ind}{\mathbf 1}
\renewcommand{\S}{\mathcal{S}}
\newcommand{\too}{\longrightarrow}
\newcommand{\N}{\mathbb{N}}
\renewcommand{\H}{\mathcal{H}}
\newcommand{\U}{\Theta}
\numberwithin{equation}{section}
\numberwithin{theorem}{section}
\begin{document}
\baselineskip = 15pt

\title[2-$d$ hyperbolic stochastic  sine-Gordon equation]
{On the two-dimensional hyperbolic 
stochastic sine-Gordon equation}

\author[T.~Oh, T.~Robert, P.~Sosoe, and Y.~Wang]
{Tadahiro Oh, Tristan Robert, Philippe Sosoe, and Yuzhao Wang}

\address{
Tadahiro Oh, School of Mathematics\\
The University of Edinburgh\\
and The Maxwell Institute for the Mathematical Sciences\\
James Clerk Maxwell Building\\
The King's Buildings\\
Peter Guthrie Tait Road\\
Edinburgh\\ 
EH9 3FD\\
 United Kingdom}

\email{hiro.oh@ed.ac.uk}

\address{
Tristan Robert, School of Mathematics\\
The University of Edinburgh\\
and The Maxwell Institute for the Mathematical Sciences\\
James Clerk Maxwell Building\\
The King's Buildings\\
Peter Guthrie Tait Road\\
Edinburgh\\ 
EH9 3FD\\
 United Kingdom}

 \curraddr{Fakult\"at f\"ur Mathematik\\
Universit\"at Bielefeld\\
Postfach 10 01 31\\
33501 Bielefeld\\
Germany}

\email{trobert@math.uni-bielefeld.de}

\address{
Philippe Sosoe, Department of Mathematics\\ Cornell University, 584 Malott Hall, Ithaca\\
New York 14853, USA}
\email{psosoe@math.cornell.edu}

\address{
Yuzhao Wang\\
School of Mathematics, 
University of Birmingham, 
Watson Building, 
Edgbaston, 
Birmingham\\
B15 2TT, 
United Kingdom}

\email{y.wang.14@bham.ac.uk}

\subjclass[2010]{35L71, 60H15}

\keywords{stochastic sine-Gordon equation;  sine-Gordon equation; 
renormalization; 
white noise;  Gaussian  multiplicative  chaos}

\begin{abstract}
We study the two-dimensional stochastic  sine-Gordon equation (SSG) 
in the hyperbolic setting.
In particular, by introducing a suitable time-dependent  renormalization
for the relevant imaginary  Gaussian  multiplicative  chaos, 
we prove local well-posedness of  SSG 
for {\it any} value of a parameter $\be^2 > 0$ in the nonlinearity.
This exhibits  sharp contrast with the parabolic case 
studied by Hairer and Shen (2016) and Chandra, Hairer, and Shen (2018),
 where the parameter is restricted to the  subcritical range:  $0 < \be^2 < 8 \pi$.
We also present a triviality result for the unrenormalized SSG.
\end{abstract}


\maketitle

\tableofcontents

\baselineskip = 14pt

\section{Introduction}
\subsection{Stochastic sine-Gordon equation}

We consider 
the following hyperbolic stochastic sine-Gordon equation   on $\T^2 = (\R/2\pi\Z)^2$
with an additive space-time white noise forcing:
\begin{align}
\begin{cases}
\dt^2 u + (1- \Dl)  u   +  \g \sin(\be u) = \xi\\
(u, \dt u) |_{t = 0} = (u_0, u_1) , 
\end{cases}
\qquad (t, x) \in \R_+\times\T^2,
\label{SSG}
\end{align}

\noi
where 
$\g$ and $\be$ are non-zero real numbers
and 
$\xi(t, x)$ denotes a (Gaussian) space-time white noise on $\R_+\times\T^2$.
In this paper, 
we are interested in studying the model 
\eqref{SSG}, 
where the linear dynamics is given by the wave equation.\footnote
{More precisely, by the Klein-Gordon equation.
In the following, we study \eqref{SSG}
with the Klein-Gordon operator $\dt^2  + (1- \Dl)$
to avoid a separate treatment at the zeroth frequency.
Note, however, that  the same results with inessential modifications also hold for 
 \eqref{SSG} with the wave operator $\dt^2- \Dl$.
 The same comment applies to \eqref{SNLW} (and \eqref{SNLW2}, respectively), 
 which we simply refer to as the stochastic nonlinear (and linear, respectively) wave equation
 in the following.}

The stochastic nonlinear wave equations (SNLW)
 have been studied extensively
in various settings; 
see \cite[Chapter 13]{DPZ14} for the references therein.
In recent years, there has been 
a rapid progress on the theoretical understanding 
of SNLW with singular stochastic forcing.
 In~\cite{GKO},  Gubinelli, Koch, 
and  the first author
studied the following stochastic nonlinear wave equations 
  with an additive space-time white noise on $\T^2$:
\begin{equation}
\label{SNLW}
 \dt^2 u + (1- \Dl)  u  + u^k =  \xi, 
\end{equation}

\noi
where $k\geq 2$ is an integer.
The main difficulty of this problem comes from the roughness
of the space-time white noise $\xi$,
which can already be seen at the linear level.
Let $\Psi$ denote 
 the stochastic convolution, 
solving the linear stochastic wave equation:
\begin{equation}
\label{SNLW2}
 \dt^2 \Psi + (1- \Dl)  \Psi   =  \xi.
\end{equation}

\noi
For the spatial dimension $d \geq 2$, 
the stochastic convolution  $\Psi$ 
is not a classical function but is merely a Schwartz distribution.
See Subsection~\ref{SUBSEC:1.2} and Lemma~\ref{LEM:psi} below.
This causes an issue in making sense of powers $\Psi^k$ and a fortiori of the full nonlinearity 
 $u^k$ in~\eqref{SNLW}, 
 necessitating 
 a renormalization of the nonlinearity. 
In \cite{GKO}, by introducing an appropriate {\it time-dependent}
renormalization, 
the authors proved local well-posedness of (a renormalized version of) \eqref{SNLW} on $\T^2$.
See \cite{OTh2, GKOT, GKO2, ORTz, OPT1, OOR}
for related works on nonlinear wave equations with singular stochastic forcing
and/or rough random initial data.
We point out that these works handle polynomial nonlinearities.

Our main goal in this paper is 
 to extend  the analysis to SNLW
 with a non-polynomial nonlinearity, 
 of which trigonometric functions
 are the simplest.
As in the case of a polynomial nonlinearity, 
a proper renormalization needs to be introduced to our problem.
This can be seen from the regularity of the stochastic convolution $\Psi$ as above.
More informally, we can see the necessity of a renormalization
from the fast oscillation of 
$\sin (\be u)$ due to the (expected) roughness of $u$, 
which makes the nonlinearity $ \sin (\be u)$ tend to $0$
in some limiting sense.  See Proposition \ref{PROP:triviality}.
In order to counterbalance such decay and have a non-trivial solution, we will take $\g \to \infty$ in~\eqref{SSG}.
See Subsection~\ref{SUBSEC:1.2}.

 The main new difficulty comes from the non-polynomial nature of the nonlinearity, 
 which makes the analysis of the relevant stochastic object
 particularly non-trivial (as compared to the previous work~\cite{GKO}).
 In particular, 
 after introducing a time-dependent renormalization, 
 we show that the regularity of the main stochastic terms
 depends on both the parameter $\be \in \R \setminus \{0\}$ and the time $t>0$.
See Proposition \ref{PROP:Ups}.
This is a striking difference from the polynomial case.

The sine nonlinearity in \eqref{SSG} is  closely related to models arising from both relativistic and quantum field theories \cite{PS, BEMS, Fro} and  has  attracted a lot of attention over the past years. 
Moreover, the following (deterministic) one-dimensional sine-Gordon equation:
\begin{align}
 \dt^2 u - \dx^2 u + \sin u = 0
\label{SG1}
\end{align} 

\noi
 is known to be completely integrable, to which  a vast literature was devoted. 
 Here, we simply mention the work by McKean 
 \cite{McKean81,McKean94}.
 See the references therein.
  In particular, in~\cite{McKean94}, 
McKean constructed an invariant Gibbs measure for \eqref{SG1}.

In the one-dimensional case,
the stochastic convolution $\Psi$ in \eqref{SNLW2} has (spatial) regularity $\frac 12 - \eps$, $\eps > 0$, 
and thus 
a straightforward computation yields local well-posedness of \eqref{SSG}.
The situation becomes much more delicate in the 
 two-dimensional setting.
In the parabolic setting,
Hairer-Shen \cite{HS}
and Chandra-Hairer-Shen \cite{CHS}
studied the following 
parabolic sine-Gordon model on $\T^2$:
 \begin{align}
\dt u -  \tfrac 12 \Dl  u   +  \g \sin(\be u) = \xi.
\label{SG2}
\end{align}

\noi
In particular, they observed that 
the difficulty of the problem depends sensitively on the value of $\be^2 > 0$.
At a heuristic level, 
this model is comparable 
to various models appearing in  constructive quantum field theory,
where the dimension $d$ can be expressed in terms of the parameter $\be$;
for example, the $\Phi^3_d$-model 
(and the $\Phi^4_d$-model, respectively) 
formally corresponds 
to \eqref{SG2} with $d = 2+ \frac{\be^2}{2\pi}$
(and  $d = 2+ \frac{\be^2}{4\pi}$, respectively).
In terms of the actual well-posedness theory,  
the Da Prato-Debussche trick \cite{DPD}
along with a standard Wick renormalization yields
local well-posedness of \eqref{SG2}
for  $0 < \be^2 <4\pi$.
It turns out that there is an infinite number of thresholds:
$\be^2 = \frac{j}{j+1}8\pi$, $j\in\N$, 
where one encounters new divergent stochastic objects, requiring further renormalizations.
By using the theory of regularity structures~\cite{H}, 
Hairer and Shen  proved local well-posedness up to the second threshold $\be^2<\frac{16\pi}{3}$
in the first work~\cite{HS}.
In the second work~\cite{CHS}, 
together with Chandra, 
they  pushed the local well-posedness theory to the entire subcritical regime $\be^2<8\pi$. 
When $\be^2 = 8 \pi$, 
the equation~\eqref{SG2}  is critical
and falls outside the scope of the current theory.
In fact, it is expected that, for $\be^2 \geq 8 \pi$, 
any reasonable approach would yield a trivial solution, 
solving the linear stochastic heat equation.
The equation \eqref{SG2} possesses
a formally invariant Gibbs measure, 
and thus it is expected that Bourgain's invariant measure
argument \cite{BO94, BO96} would allow one
to extend 
the local-in-time dynamics constructed in \cite{HS, CHS}
globally in time.
We mention a recent work~\cite{LRV2} on the construction of
a Gibbs measure for the sine-Gordon model
with a log-correlated Gaussian process
(but their result is restricted to $d = 1$).
We also mention  recent papers \cite{Ga, ST}
on dynamical problems with 
 an exponential nonlinearity in the two-dimensional setting;
in \cite{Ga}, Garban studied
the dynamical problem with an exponential nonlinearity 
in the parabolic setting, 
while in \cite{ST}, Sun and Tzvetkov
considered 
a dispersion generalized nonlinear wave equation 
with an exponential nonlinearity 
in the context of random data well-posedness theory.\footnote{See also a recent preprint
\cite{ORW}, where
we studied SNLW on $\T^2$ with the exponential nonlinearity.}

Our model \eqref{SSG} in the hyperbolic setting
is  sensitive to the value of $\be^2 >0$ as in the parabolic case.
Furthermore, due to the non-conservative nature of the problem,\footnote{We are
considering the problem without the damping term,
where there is no invariant measure for the linear dynamics.  See Remark \ref{REM:Gibbs} below.}
the renormalization we employ is time-dependent as in \cite{GKO}
and 
the difficulty also depends on the time parameter $t>0$.
See Proposition \ref{PROP:Ups}.
On the one hand, by taking $t>0$ small, 
we can make sure that the relevant stochastic object is not too rough, allowing
us to establish local well-posedness of \eqref{SSG}
for {\it any} value of $\be^2 > 0$
(Theorem~\ref{THM:main}).
On the other hand, even if we start the dynamics with small $\be^2 > 0$, 
our analysis, when compared to  the parabolic setting \cite{HS, CHS},  
formally indicates existence of 
an infinite number of thresholds $T_j = T_j(\be)$, now given in terms of time, 
\[ T_j = \frac{16j\pi}{(j+1)\be^2}, \qquad j \in \N, \]

\noi
where we encounter new divergent stochastic objects,  
requiring further renormalizations.
As in the parabolic case, the time $T_\infty = \frac{16 \pi}{ \be^2}$
corresponds to the critical value, after which 
we do not expect to be able to extend the dynamics.\footnote{Or perhaps, 
the dynamics may trivialize to the linear dynamics after the critical time $T_\infty$.}
It is quite intriguing that 
the singular nature of the problem \eqref{SSG}
depends sensitively on time
and gets worse over time, contrary to  the parabolic setting.

\subsection{Renormalization of the nonlinearity} 
\label{SUBSEC:1.2}

In order to explain the renormalization procedure, we
first consider the following regularized equation
for \eqref{SSG}:
\begin{align}
\dt^2 u_N + (1-  \Dl)  u_N   + \g \Im \big(e^{i\be u_N}\big)   = \P_N \xi, 
\label{SSGN}
\end{align}

\noi
where  
$\P_N$ is a smooth frequency projector
onto the (spatial) frequencies  $\{n\in\Z^2:|n|\leq N\}$,
associated with  a Fourier multiplier
\begin{align}
\chi_N(n) = \chi\big(N^{-1}n\big)
\label{chi}
\end{align}

\noi
for some fixed non-negative function $\chi \in C^\infty_c(\R^2)$
with $\supp \chi \subset \{\xi\in\R^2:|\xi|\leq 1\}$ and $\chi\equiv 1$ 
on $\{\xi\in\R^2:|\xi|\leq \tfrac12\}$.

We first  define the truncated stochastic convolution $\Psi_N = \P_N \Psi$, 
solving the truncated linear stochastic  wave equation:
\begin{align}
\dt^2 \Psi_N + (1-\Dl)\Psi_N = \P_N \xi
\label{PsiN0}
\end{align}

\noi
with the zero initial data.
With  $\jb{\,\cdot\,} = (1+|\cdot|^2)^\frac{1}{2}$, 
let $S(t)$ denote 
the linear wave propagator
\begin{equation}\label{S}
S(t) = \frac{\sin(t\jb{\nabla})}{\jb{\nabla}},
\end{equation} 

\noi
defined as a Fourier multiplier operator.
Namely, 
we set 
\[ 
S(t) f  = \sum_{n \in \Z^2 }
\frac{\sin(t\jb{n})}{\jb{n}}
\ft f (n) e_n, \]

\noi
where
$\ft{f}(n)$ is the Fourier coefficient of $f$
and
$e_n(x)= (2\pi)^{-1}e^{in\cdot x}$ as in \eqref{exp}.
Then, the truncated stochastic convolution  $\Psi_N$, 
solving \eqref{PsiN0}, is given by 
\begin{align} 
\Psi_N (t) 
 = \int_0^tS(t - t')\P_NdW(t'), 
\label{PsiN}
\end{align}

\noi
where  $W$ denotes a cylindrical Wiener process on $L^2(\T^2)$:
\begin{align}
W(t)\stackrel{\text{def}}{=} \sum_{n \in \Z^2 } B_n (t) e_n
\label{W1}
\end{align}

\noi
and  
$\{ B_n \}_{n \in \Z^2}$ 
is defined by 
$B_n(0) = 0$ and 
$B_n(t) = \jb{\xi, \ind_{[0, t]} \cdot e_n}_{ t, x}$.
Here, $\jb{\cdot, \cdot}_{t, x}$ denotes 
the duality pairing on $\R\times \T^2$
and thus we formally have
\[
B_n(t) = \jb{\xi, \ind_{[0, t]} e_n}_{t, x}
 = \text{``}\int_0^t \int_{\T^2}
\cj{e_n(x)} \xi( dx dt')\text{''}.\]

\noi
As a result, 
we see that $\{ B_n \}_{n \in \Z^2}$ is a family of mutually independent complex-valued\footnote
{In particular, $B_0$ is  a standard real-valued Brownian motion.} 
Brownian motions conditioned so that $B_{-n} = \cj{B_n}$, $n \in \Z^2$. 
By convention, we normalized $B_n$ such that $\text{Var}(B_n(t)) = t$.
Then, for each fixed $x \in \T^2$ and $t \geq 0$,  
we see that  $\Psi_N(t,x)$
 is a mean-zero real-valued Gaussian random variable with variance
\begin{align}
\begin{split}
\s_N(t) &  \stackrel{\text{def}}{=} \E \big[\Psi_N(t,x)^2\big]
 =  \frac{1}{4\pi^2}\sum_{n \in \Z^2}  \chi_N^2(n)
\int_0^t \bigg[\frac{\sin((t - t')\jb{n})}{\jb{n}} \bigg]^2 dt'
\\
& =  \frac{1}{4\pi^2}\sum_{n \in \Z^2} \chi_N^2(n) \bigg\{ \frac{t}{2 \jb{n}^2} - \frac{\sin (2 t \jb{n})}{4\jb{n}^3 }\bigg\}\sim  t \log N
\end{split}
\label{sig}
\end{align}

\noi
\noi
for all $t\in[0,1]$ and $N \gg 1$.
We emphasize that the variance $\s_N(t)$ is time-dependent.
For any $t > 0$, 
we see that  $\s_N(t) \to \infty$ as $N \to \infty$, 
showing that $\{\Psi_N(t)\}_{N \in \N}$
is almost surely unbounded in $W^{0, p}(\T^2)$ for any $1 \leq p \leq \infty$.
See also Lemma \ref{LEM:psi} below.

If we were to proceed with a Picard iteration
to study \eqref{SSGN}, 
the next term we need to study is 
\begin{align}
e^{i\be \Psi_N} = \sum_{k= 0}^\infty\frac{(i\be)^k}{k!}\Psi_N^k.
\label{P1}
\end{align}

\noi
As pointed above,
the power $\Psi_N^k$,  $k \geq 2$,  does not have any nice limiting behavior as $N \to \infty$.
As in \cite{GKO}, 
we now introduce the Wick renormalization:
\begin{align}
:\!\Psi_N^k(t,x)\!:\, \stackrel{\text{def}}{=} H_k\big(\Psi_N(t,x);\s_N(t)\big)
\label{P2}
\end{align}

\noi
to each power $\Psi_N^k$ appearing in \eqref{P1}.
Here,  $H_k$ denotes the $k$th Hermite polynomial, defined through the generating function:
\begin{equation}\label{Hermite}
e^{t x-\frac{\s}2 t^2} = \sum_{k= 0}^\infty\frac{t^k}{k!}H_k(x;\s).
\end{equation}

\noi
From \eqref{P2} and \eqref{Hermite}, 
the renormalized complex exponential is then given by 
 \begin{align}
\begin{split}
 \U_N(t,x)
 &  = \,:\!e^{i\be\Psi_N(t,x)}\!:\,\,  
 \deff
 \sum_{k= 0}^\infty\frac{(i\be)^k}{k!}:\!\Psi_N^k(t, x)\!:\\
 & =  e^{\frac{\be^2}2 \s_N(t)}e^{i\be\Psi_N(t,x)}.
\end{split}
\label{Ups}
 \end{align}

 \noi 
Following \cite{CHS}, we refer to 
$ \U_N$ as the imaginary Gaussian 
 multiplicative chaos.
The following proposition establishes the regularity
and convergence property 
of the imaginary Gaussian multiplicative chaos $\U_N$.

\begin{proposition}\label{PROP:Ups} 
Let $\be\in\R \setminus \{0\}$ and $T>0$
such that $\be^2 T < 8 \pi$.
Then, 
given any finite $p, q\geq 1$ and  any $\al > 0$ satisfying $ \al > \frac{\be^2T}{8\pi} $, the sequence of random variables $\U_N$ is a Cauchy sequence in $L^p(\O;L^q([0,T];W^{-\al,\infty}(\T^2)))$
and hence converges to some limit $\U$
in $L^p(\O;L^q([0,T];W^{-\al,\infty}(\T^2)))$. 

 \end{proposition}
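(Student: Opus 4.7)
The plan is to exploit the fact that $\U_N$ is, up to the deterministic factor $e^{\frac{\be^2}{2}\s_N(t)}$, an explicit bounded nonlinear function of the Gaussian field $\Psi_N$, so that every joint moment of products of $\U_N$'s and their complex conjugates can be computed in closed form via the Gaussian identity $\E[e^{iZ}]=e^{-\frac{1}{2}\mathrm{Var}(Z)}$. For finite collections $(t_j,x_j,N_j)$ and $(s_k,y_k,M_k)$, $j,k=1,\dots,p$, this yields
\begin{multline*}
\E\bigg[\prod_{j}\U_{N_j}(t_j,x_j)\prod_{k}\overline{\U_{M_k}(s_k,y_k)}\bigg]
=\exp\bigg(\be^2\sum_{j,k}G_{N_j,M_k}(t_j,x_j;s_k,y_k) \\
-\be^2\sum_{j<j'}G_{N_j,N_{j'}}(t_j,x_j;t_{j'},x_{j'}) -\be^2\sum_{k<k'}G_{M_k,M_{k'}}(s_k,y_k;s_{k'},y_{k'})\bigg),
\end{multline*}
where $G_{N,M}(t,x;s,y)=\E[\Psi_N(t,x)\Psi_M(s,y)]$. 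The renormalization $e^{\frac{\be^2}{2}\s_N(t)}$ is precisely what cancels the would-be divergent diagonal contributions $G_{N_j,N_j}(t_j,x_j;t_j,x_j)$, leaving an expression that depends only on off-diagonal covariances.

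The next step is a careful asymptotic analysis of $G_{N,M}(t,x;s,y)$. From \eqref{PsiN} and the Fourier-series manipulations already visible in \eqref{sig}, one shows that at equal times $s=t$ the $\frac{t}{2\jb{n}^2}$-piece produces the logarithmic singularity $-\frac{t}{4\pi}\log|x-y|$ modulo a smooth remainder, while the oscillatory $\sin(2t\jb{n})/\jb{n}^3$-piece remains uniformly bounded in $(t,x)$. Combined with the moment identity this yields the pointwise kernel bound
\[
\big|\E\big[\U_N(t,x)\,\overline{\U_M(s,y)}\big]\big|\les |x-y|^{-\frac{\be^2\min(t,s)}{4\pi}}
\]
uniformly in $N,M\in\N$ and $t,s\in[0,T]$. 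A weighted integration against $|x-y|^{2\al-2}$ (the spatial Riesz kernel of $\jb{\nb}^{-2\al}$) shows that this two-dimensional singularity is integrable precisely when $\al>\frac{\be^2T}{8\pi}$, which is the origin of the threshold $\be^2T<8\pi$ in the statement.

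To upgrade pointwise covariance bounds into $W^{-\al,\infty}$-moments I fix an auxiliary pair $(r,\al_0)$ with $r\geq 2$ an even integer and $\al_0<\al$ satisfying $\al-\al_0>\frac{2}{r}$, so that Sobolev embedding on $\T^2$ gives $W^{-\al_0,r}\hookrightarrow W^{-\al,\infty}$. Using Fubini and $|z|^r=z^{r/2}\bar z^{r/2}$,
\[
\E\big[\|\jb{\nb}^{-\al_0}\U_N(t)\|_{L^r_x}^{r}\big]=\int_{\T^2}\E\big[|\jb{\nb}^{-\al_0}\U_N(t,x)|^{r}\big]\,dx,
\]
and each integrand becomes a multi-point integral of the moment identity of the first paragraph against the Bessel kernel of $\jb{\nb}^{-\al_0}$. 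Absolute convergence reduces to the same threshold $\al_0>\frac{\be^2T}{8\pi}$ applied to each cross-pair, which is compatible with $\al-\al_0>\frac{2}{r}$ by taking $r$ sufficiently large in view of the strict inequality $\al>\frac{\be^2T}{8\pi}$; an arbitrary $p<\infty$ is then accommodated by enlarging $r$ further or via Minkowski's inequality in $L^p_\o L^r_x$.

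Finally, the Cauchy property follows by running the same scheme on $\U_N-\U_M$, based on
\[
\E\big[(\U_N-\U_M)(t,x)\,\overline{(\U_N-\U_M)(t,y)}\big]=e^{\be^2G_{N,N}(t,x;t,y)}+e^{\be^2G_{M,M}(t,x;t,y)}-2\,e^{\be^2G_{N,M}(t,x;t,y)},
\]
together with the convergence $G_{N,M}(t,x;t,y)\to G(t,x;t,y)$ afforded by the frequency cut-offs $\chi_N,\chi_M$ in \eqref{chi}; the leading logarithmic singularities cancel in this combination, yielding a kernel that is integrable uniformly in $N,M$ and vanishes as $N,M\to\infty$. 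The $L^q_t$-integration on $[0,T]$ is then routine since all constants depend continuously on $t\in[0,T]$. The most delicate step, and the main obstacle I anticipate, is the precise asymptotic analysis of $G_{N,M}(t,x;s,y)$ for general $(t,s)$: one has to separate the leading $\min(t,s)$-weighted logarithmic singularity from the oscillatory $\sin(2t\jb{n})$ remainder in \eqref{sig}, verify uniform boundedness of this remainder on $[0,T]^2\times\T^2$, and extract a quantitative convergence rate that is stable throughout the entire subcritical regime $\be^2T<8\pi$.
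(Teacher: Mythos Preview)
Your overall framework---Gaussian moment identity, Sobolev embedding $W^{-\al_0,r}\hookrightarrow W^{-\al,\infty}$, Minkowski to put $L^p_\o$ inside---matches the paper's. The Cauchy step at the level of second moments, followed by interpolation against the uniform higher-moment bound, is also what the paper does. But there is a genuine gap in your treatment of the higher moments.

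When you expand $\E\big[|\jb{\nb}^{-\al_0}\U_N(t,x)|^{r}\big]$ with $r=2p$, the Gaussian identity yields a $(2p)$-fold integral whose integrand contains the full product
\[
\prod_{1\le j<k\le 2p}\big(|y_j-y_k|+N^{-1}\big)^{\eps_j\eps_k\frac{\be^2 t}{4\pi}},\qquad \eps_j=(-1)^j,
\]
that is, $p^2$ \emph{singular} opposite-charge factors $(|y_j-y_k|+N^{-1})^{-\lambda}$ together with $p(p-1)$ \emph{helpful} same-charge factors $(|y_j-y_k|+N^{-1})^{+\lambda}$, where $\lambda=\frac{\be^2 t}{4\pi}$. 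Your assertion that ``absolute convergence reduces to the same threshold $\al_0>\frac{\be^2 T}{8\pi}$ applied to each cross-pair'' is not correct if one simply bounds the same-charge factors by a constant: one is then left with $p^2$ singular factors integrated against only $2p$ Bessel kernels, and the resulting integral diverges for all sufficiently large $p$ (already integrating out a single $y_{2j}$ requires $p\lambda<\al_0-\dl$, which fails once $p$ is large). The threshold $\al_0>\frac{\be^2 T}{8\pi}$ is \emph{not} what governs the naive product; it only emerges after a cancellation.

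The missing ingredient is the electrostatic ``dipole'' inequality (Lemma~\ref{LEM:prod} in the paper, or Proposition~3.5 in Hairer--Shen): the same-charge factors, far from being discardable, are exactly what is needed to neutralise all but $p$ of the $p^2$ opposite-charge singularities, reducing the product to
\[
\prod_{1\le j<k\le 2p}\big(|y_j-y_k|+N^{-1}\big)^{\eps_j\eps_k\lambda}
\ \lesssim\ \max_{\tau\in S_p}\prod_{j=1}^{p}\big(|y_{2j}-y_{2\tau(j)-1}|+N^{-1}\big)^{-\lambda}.
\]
Only after this reduction does the $(2p)$-fold integral factor into $p$ independent two-variable integrals, each of which converges under the stated threshold. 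This combinatorial step is the analytic heart of the proof and cannot be bypassed. A minor additional remark: since the target norm is $L^q_t W^{-\al,\infty}_x$, you may work at a single fixed time $t$ throughout and never need covariances $G_{N,M}(t,x;s,y)$ at distinct times; the general-$(t,s)$ analysis you flag as the ``main obstacle'' is unnecessary here.
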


In view of the convergence of the truncated stochastic convolution $\Psi_N$
to $\Psi$ (see Lemma~\ref{LEM:psi}), 
it is natural to write the limit $\U$ as  
\begin{align*}
\U = \, :\!e^{i\be \Psi}\!:.
\end{align*}

\noi
See Remark \ref{REM:app} below
on the uniqueness of the limiting process $\U$.

In the stationary and parabolic settings,
analogous results were established
by Lacoin, Rhodes, and Vargas 
\cite[Theorem 3.1]{LRV}
and Hairer and Shen \cite[Theorem 2.1]{HS}.\footnote{While Theorem 2.1 in \cite{HS}
is stated in terms of space-time regularity, 
it  implies that the conclusion of Proposition \ref{PROP:Ups}  holds 
in the parabolic case, 
provided that  
 $\be^2 < 4 \pi$ and 
 $ \al > \frac{\be^2}{4\pi} $, i.e.~corresponding to 
 the restrictions on $\be$ and $\al $ in 
 Proposition \ref{PROP:Ups} with $T = 2$. }
The main difference between Proposition~\ref{PROP:Ups}
and the previous results in \cite{LRV, HS}
is the dependence of the regularity on the time parameter $T>0$.
In particular, as $T$ increases, the regularity of $\U_N$
gets worse.
On the other hand, 
for fixed $\be \in \R \setminus \{0\}$, by taking $T>0$ small, 
we can take $\{\U_N(t) \}_{N\in \N}$ almost bounded in $L^2(\T^2)$, 
$0 < t \leq T$.

The proof of Proposition \ref{PROP:Ups}
is in the spirit of \cite{LRV,HS}. 
In the case of a polynomial nonlinearity
\cite{GKO, GKO2}, it is enough to estimate  the second moment
and then invoke the Wiener chaos estimate (see, for example, Lemma 2.5 in \cite{GKO2})
to obtain the $p$th moment bound,
since the stochastic objects in \cite{GKO, GKO2}
all belong to Wiener chaoses of finite order. 
The imaginary Gaussian multiplicative
 chaos $\U_N$
 in \eqref{Ups},  however, does \emph{not} belong to any  Wiener chaos of finite order.
 This forces us to estimate all its higher moments by hand. 
 In Section \ref{SEC:2}, we present a proof of Proposition \ref{PROP:Ups}.
 While we closely follow the argument in~\cite{HS}, 
our argument is based on an  elementary calculation. 
See Lemma \ref{LEM:prod}.

\subsection{Main results}
In view of the previous discussion, we are thus led to study 
 the following  renormalized  stochastic sine-Gordon equation:
\begin{align}
\begin{cases}
\dt^2 u_N +(1-\Dl)  u_N   + \g_N\sin (\be u_N)   =  \P_N \xi \\ 
(u_N, \dt u_N) |_{t = 0} = (u_0, u_1),
\end{cases}
\label{RSSGN}
\end{align} 

\noi
where 
 $\g_N$ is defined by 
 \begin{equation}\label{gN}
 \g_N(t,\be) = e^{\frac{\be^2}{2}\s_N(t)}
 \too \infty, 
 \end{equation}

\noi
as $N \to \infty$.
We now state the local well-posedness result.

 \begin{theorem}\label{THM:main}
Let  $\be\in\R\setminus\{0\}$
and $s > 0$.
Given any $(u_0, u_1) \in \H^s(\T^2)
= H^{s}(\T^2) \times H^{s-1}(\T^2)$, 
the Cauchy problem \eqref{RSSGN}
is uniformly locally well-posed
in the following sense;
there exists  
$T_0 =  T_0\big(\|(u_0,u_1)\|_{\H^s}, \be\big)>0$ 
such that given  
any  $0 < T\leq T_0$
and $N \in \N$,  
there exists a set 
$\O_N(T)\subset \O$
such that 
\begin{itemize}
\item[\textup{(i)}]
for any $\o \in \O_N(T)$, 
there exists a unique solution $u_N$ to \eqref{RSSGN} 
in the class
\begin{equation*}
 \Psi_N + X^\s(T) 
\subset C([0, T]; H^{-\eps}(\T^2)), 
\end{equation*}

\noi
for any small $\eps > 0$, 
where $\Psi_N$ is as in \eqref{PsiN}, 
$X^\s(T)$ is  the Strichartz space   defined in~\eqref{X}, 
and  $\s = \min(s, 1-\eps)$, 

\item[\textup{(ii)}] there exists a uniform estimate on the probability of 
the complement of $\O_N(T)$:
\[P(\O_N(T)^c) \too 0,  \]

\noi
 uniformly in $N \in \N$, as $T \to 0$,

\end{itemize}

Furthermore, 
there exist
 an almost surely positive  stopping time $\tau =  \tau\big(\|(u_0,u_1)\|_{\H^s}, \be\big)$ 
 and a stochastic process $u$ in the class
\begin{equation*}
 \Psi + X^\s(T) 
\subset C([0, T]; H^{-\eps}(\T^2))
\end{equation*}

\noi
for any $\eps > 0$
such that, given any small $T >0 $,  
on the event $\{ \tau \geq T\}$, 
the solution $u_N$ to~\eqref{RSSGN}
converges in probability to $u$ in 
$C([0, T]; H^{-\eps}(\T^2))$.
\end{theorem}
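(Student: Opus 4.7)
The plan is to reduce \eqref{RSSGN} to a deterministic Cauchy problem for the remainder $v_N := u_N - \Psi_N$, and to solve it by a Banach fixed point in the Strichartz space $X^\s(T)$ pathwise on a high-probability event where the imaginary Gaussian multiplicative chaos $\U_N$ is under control. Using the trigonometric identity $\sin(\be(\Psi_N + v_N)) = \sin(\be\Psi_N)\cos(\be v_N) + \cos(\be\Psi_N)\sin(\be v_N)$ together with $\g_N e^{i\be\Psi_N} = \U_N$ from \eqref{Ups} (so that $\g_N\sin(\be\Psi_N) = \Im\U_N$ and $\g_N\cos(\be\Psi_N) = \Re\U_N$), the equation \eqref{RSSGN} rewrites as
\begin{equation*}
\dt^2 v_N + (1-\Dl) v_N = -\Im(\U_N)\cos(\be v_N) - \Re(\U_N)\sin(\be v_N), \qquad (v_N, \dt v_N)|_{t=0} = (u_0, u_1),
\end{equation*}
which via Duhamel and \eqref{S} becomes a fixed-point equation $v_N = \Gamma_N(v_N)$.

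Next, I would fix $\al$ slightly above $\frac{\be^2 T}{8\pi}$, strictly below $\s = \min(s, 1-\eps)$, together with a large finite exponent $q$, and set
\begin{equation*}
\O_N(T) := \bigl\{ \|\U_N\|_{L^q([0,T]; W^{-\al,\infty}(\T^2))} \leq R \bigr\}
\end{equation*}
for an $R$ chosen depending on $\|(u_0,u_1)\|_{\H^s}$. Chebyshev combined with the uniform-in-$N$ bounds of Proposition~\ref{PROP:Ups} then gives $P(\O_N(T)^c) \to 0$ as $T \to 0$, uniformly in $N$, yielding (ii). On $\O_N(T)$, the contraction argument relies on two ingredients: the two-dimensional Strichartz estimates for the wave propagator $S(t)$, and a paraproduct-type estimate of the form
\begin{equation*}
\|\U_N \, w\|_{H^{-\al}(\T^2)} \lesssim \|\U_N\|_{W^{-\al,\infty}(\T^2)} \, \|w\|_{H^\s(\T^2)}, \qquad 0 < \al < \s,
\end{equation*}
which follows from Bony's decomposition. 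Since $\sin(\be\cdot)$ and $\cos(\be\cdot) - 1$ are Lipschitz with bounded derivatives, standard composition estimates place $\sin(\be v_N)$ and $\cos(\be v_N)-1$ in $H^\s$ for $0 < \s \leq 1$ with Lipschitz dependence on $v_N$. Putting these together, for $T = T_0$ small enough depending on $R$, $\be$, and $\|(u_0,u_1)\|_{\H^s}$, the map $\Gamma_N$ is a contraction on a ball of radius $\sim \|(u_0,u_1)\|_{\H^s}$ in $X^\s(T)$, producing the unique solution $v_N$ claimed in (i).

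For the convergence statement, I would run exactly the same scheme with $\U$ in place of $\U_N$ to construct a limit $v \in X^\s(T)$ on the event $\{\tau \geq T\}$, where $\tau$ is the almost-surely positive hitting time of the threshold $R/2$ by $t \mapsto \|\U\|_{L^q([0,t]; W^{-\al,\infty})}$. Subtracting the two fixed-point equations and combining the contraction estimates with the $L^p(\O; L^q_T W^{-\al,\infty})$-convergence $\U_N \to \U$ from Proposition~\ref{PROP:Ups}, a Gronwall-type argument delivers $v_N \to v$ in probability in $C([0,T]; H^\s)$, and hence $u_N = \Psi_N + v_N \to \Psi + v =: u$ in probability in $C([0,T]; H^{-\eps})$ (also using $\Psi_N \to \Psi$ from Lemma~\ref{LEM:psi}). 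The main obstacle is the compatibility condition $\s > \al > \frac{\be^2 T}{8\pi}$: this is the precise mechanism forcing $T_0$ to shrink as $\be^2$ grows. Once $T$ is small enough to reconcile the two scales, the Lipschitz character of the trigonometric nonlinearity makes the composition step routine, and crucially no further divergent stochastic objects enter this first-order expansion.
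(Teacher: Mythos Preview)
Your overall strategy matches the paper's: the Da Prato--Debussche decomposition $u_N = \Psi_N + v_N$, a pathwise contraction for $v_N$ in $X^\s(T)$ on events where $\|\U_N\|_{L^q_T W^{-\al,\infty}_x}$ is bounded, and convergence via stability of the fixed point under $\U_N \to \U$. The definition of $\O_N(T)$ and the use of Proposition~\ref{PROP:Ups} together with Chebyshev for part~(ii) are exactly what the paper does.

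There is, however, a gap in the contraction step. You assert that ``standard composition estimates place $\sin(\be v_N)$ and $\cos(\be v_N)-1$ in $H^\s$ \dots\ with Lipschitz dependence on $v_N$.'' The boundedness $\|\sin(\be v)\|_{H^\s} \les \|v\|_{H^\s}$ does follow from the fractional chain rule (Lemma~\ref{LEM:FC} with $p=2$), but the Lipschitz estimate
\[
\|\sin(\be v) - \sin(\be w)\|_{H^\s} \les C\big(\|v\|_{H^\s},\|w\|_{H^\s}\big)\,\|v-w\|_{H^\s}
\]
is \emph{not} available for $0<\s<1$ in two dimensions from $H^\s$ control alone. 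Writing $\sin(\be v)-\sin(\be w) = (v-w)\int_0^1 \be\cos\big(\be(\tau v+(1-\tau)w)\big)\,d\tau$ and applying the fractional Leibniz rule~\eqref{bilinear+} forces one of the two factors into a space $W^{\s,p}$ with $p>2$, which $H^\s$ does not control (there is no embedding $H^\s \hookrightarrow L^\infty$ here, and $H^\s \hookrightarrow W^{\s,p}$ fails for $p>2$). The paper closes this precisely by exploiting the $L^q_TL^r_x$ component of $X^\s(T)$: via the interpolation~\eqref{interpolation} one has $v_N \in L^{q_1}_TW^{\al,r_1}_x$ with $r_1>2$, and the carefully arranged exponent relations~\eqref{bd-pairs} and~\eqref{bd-pairs2} (namely $2\wt r_2 < r_1$ and $2\wt q_2 < q_1$) give enough room to run Lemma~\ref{LEM:bilin}(i) and Lemma~\ref{LEM:FC} in $L^{2\wt r_2}_x$, producing the genuine difference estimate~\eqref{Z5}--\eqref{Z6}. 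Your $H^{-\al}$ paraproduct for the product with $\U_N$ is a legitimate alternative to the paper's Lemma~\ref{LEM:bilin}(ii), but the composition step must still be carried out in the Strichartz scale rather than in $C_TH^\s_x$; as written, your contraction does not close.
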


Note that,  in Theorem~\ref{THM:main} above, 
we can take 
the parameter $\beta^2 > 0$ arbitrarily large. 
This exhibits sharp contrast with the parabolic case
studied in \cite{HS, CHS}, 
where the parameter was restricted to the subcritical range: $0 < \be^2 < 8 \pi$.

The main point is that Proposition \ref{PROP:Ups}
shows that
  the imaginary 
Gaussian multiplicative chaos $\U_N$ 
is almost   a function (namely, $\al > 0$ small in 
Proposition \ref{PROP:Ups})
by taking $T = T(\be)>0$ sufficiently small.
In particular, we can apply the analysis from \cite{GKO}
on the polynomial nonlinearity
via the Da Prato-Debussche trick
to study \eqref{RSSGN}; namely
write a solution $u_N$ to \eqref{RSSGN} as
\[u_N = \Psi_N + v_N. \]

\noi
In view of \eqref{Ups} and \eqref{gN}, 
the residual term $v_N$ then satisfies
\begin{equation}\label{vN}
\begin{cases}
\dt^2v_N + (1-\Dl)v_N + \Im\big(\U_N e^{i\be v_N}\big)=0\\
(v_N,\dt v_N)\big|_{t=0}=(u_0,u_1).
\end{cases}
\end{equation}

\noi
Then, by taking $T = T(\be) >0$
sufficiently small, thus guaranteeing
the regularity of $\U_N$
via Proposition \ref{PROP:Ups}, 
the standard Strichartz analysis 
as in \cite{GKO} along with the fractional chain rule (Lemma \ref{LEM:FC})
suffices
to conclude Theorem \ref{THM:main}.
As in \cite{GKO}, 
our argument shows that $u$ constructed in Theorem \ref{THM:main}
has a particular structure $u = \Psi + v$, 
where $v \in X^\s(T)$ satisfies 
\begin{equation}
\begin{cases}
\dt^2v + (1-\Dl)v + \Im\big(\U e^{i\be v}\big)=0\\
(v,\dt v)\big|_{t=0}=(u_0,u_1),
\end{cases}
\label{v1}
\end{equation}

\noi
where $\U$ is the limit of $\U_N$ constructed in Proposition \ref{PROP:Ups}.

\smallskip

Next, we consider 
 the  sine-Gordon equation \eqref{SSG} {\it without} renormalization
by studying  its frequency-truncated version:\footnote{Here, we set $\g = 1$ in \eqref{SSG} and \eqref{SSGN} for simplicity.}
 \begin{align}
\begin{cases}
\dt^2 u_N +(1-\Dl)  u_N   + \sin (\be u_N)   =  \P_N \xi \\ 
(u_N, \dt u_N) |_{t = 0} = (u_0, u_1).
\end{cases}
\label{SSG2}
\end{align} 

 \noi
 In studying the limit as $N \to \infty$,
 we expect the solution $u_N$ to become singular.
 As a result, in view of faster and faster ``oscillation'', 
 we expect 
$\sin (\be u_N)$  to tend to 0 as a space-time distribution. 
This is the reason why we needed to insert 
a diverging multiplicative constant $\g_N$
in the renormalized model \eqref{RSSGN}.

\begin{remark}\label{REM:app}\rm 
In Proposition \ref{PROP:Ups}
and Theorem \ref{THM:main} above, 
we used a smooth frequency projector 
$\P_N$ with the multiplier $\chi_N$ in \eqref{chi}.
As in the parabolic case, 
it is possible to show that 
 the limiting process 
$\U$ of $\U_N$ in Proposition~\ref{PROP:Ups}
and 
the limit $u$ of $u_N$ in Theorem~\ref{THM:main}
are independent of the choice of the smooth cutoff function $\chi$.
See \cite{OOTz} for such an argument
in the wave case (with a polynomial nonlinearity).
Moreover, 
we may also proceed by smoothing via a mollification
and obtain analogous results.
In this case, the limiting processes agree with 
those constructed in 
 Proposition~\ref{PROP:Ups}
and Theorem~\ref{THM:main}.

\end{remark}

In the following, 
we study the limiting behavior of $u_N$, 
solving the {\it unrenormalized} model~\eqref{SSG2}
with regularized noises,
and establish a triviality result.
The heuristics above indicates that the nonlinearity 
$\sin (\be u_N) $ tends to 0 in some suitable sense,
indicating that $u_N$ converges to 
a solution to the linear stochastic wave equation.
The next proposition shows that this is indeed the case.

\begin{proposition}\label{PROP:triviality}
Let  $\be\in\R\setminus\{0\}$
and 
fix  $(u_0, u_1) \in \H^s(\T^2)$ for some $s > 0$.
Given any small $T>0$, 
the solution $u_N$ to \eqref{SSG2}
converges in probability 
to 
the solution $u$, satisfying  the following linear stochastic wave equation:
\begin{equation}\label{LSW}
\begin{cases}
\dt^2 u + (1-\Dl) u = \xi\\
(u,\dt u)|_{t=0}=(u_0,u_1)
\end{cases}
\end{equation}

\noi
in the class $C([0,T];H^{-\eps}(\T^2))$, 
$\eps > 0$, as $N \to \infty$.

\end{proposition}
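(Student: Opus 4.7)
The plan is to run the Da Prato--Debussche decomposition of Theorem~\ref{THM:main} on the unrenormalized equation~\eqref{SSG2} and exhibit a vanishing prefactor $\g_N^{-1}$ in front of the nonlinearity. Writing $u_N = \Psi_N + v_N$ with $\Psi_N$ as in~\eqref{PsiN}, the residual $v_N$ satisfies
\begin{equation*}
\begin{cases}
\dt^2 v_N + (1-\Dl) v_N + \sin(\be u_N) = 0, \\
(v_N, \dt v_N)|_{t=0} = (u_0, u_1).
\end{cases}
\end{equation*}
The addition formula for sine, together with the identities $\sin(\be \Psi_N) = \g_N^{-1} \Im \U_N$ and $\cos(\be \Psi_N) = \g_N^{-1} \Re \U_N$ that follow from~\eqref{Ups}--\eqref{gN}, yields
\begin{equation*}
\sin(\be u_N) \;=\; \Im \big( e^{i \be \Psi_N} e^{i \be v_N} \big) \;=\; \g_N^{-1} \Im \big( \U_N e^{i\be v_N} \big).
\end{equation*}
Hence $v_N$ solves exactly~\eqref{vN} with $\U_N$ replaced by $\g_N^{-1} \U_N = e^{i\be \Psi_N}$.

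The first key step is to show that $e^{i\be \Psi_N} \to 0$ in $L^p(\O; L^q([0,T]; W^{-\al,\infty}(\T^2)))$ for small $T>0$ (say $\be^2 T < 8\pi$) and any $\al > \frac{\be^2 T}{8\pi}$. Splitting
\begin{equation*}
e^{i\be \Psi_N} \;=\; \g_N^{-1} \U_N \;=\; \g_N^{-1}(\U_N - \U) + \g_N^{-1} \U,
\end{equation*}
the first term is bounded by $\|\U_N - \U\|_{L^p L^q W^{-\al,\infty}}$ (since $0 < \g_N^{-1}(t) \leq 1$), which vanishes by Proposition~\ref{PROP:Ups}. For the second, I would invoke dominated convergence in $t$: by~\eqref{sig}, $\g_N^{-1}(t) = e^{-\frac{\be^2}{2}\s_N(t)} \to 0$ pointwise for every $t > 0$, while $\|\U(\,\cdot\,)\|_{W^{-\al,\infty}}$ provides an integrable majorant in time almost surely; an outer dominated convergence in $\o$ then controls the $L^p(\O)$ norm.

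Next, I would rerun the Strichartz fixed-point argument from Theorem~\ref{THM:main} with $\U_N$ replaced by $e^{i\be \Psi_N}$, obtaining on a random interval $[0,T]$---independent of $N$---a unique $v_N \in X^\s(T)$ solving the analog of~\eqref{vN}. Because the driving datum $e^{i\be \Psi_N}$ tends to zero while $e^{i\be v_N}$ is bounded by $1$ and carries uniformly controlled Sobolev regularity (via the fractional chain rule, Lemma~\ref{LEM:FC}), a standard difference estimate in $X^\s(T)$ gives $v_N \to v_\infty$, where $v_\infty$ is the free Klein--Gordon evolution of $(u_0, u_1)$ on $[0,T]$. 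Combining with $\Psi_N \to \Psi$ in $C([0,T]; H^{-\eps}(\T^2))$ (Lemma~\ref{LEM:psi}), we conclude $u_N \to \Psi + v_\infty$ in probability in $C([0, T]; H^{-\eps}(\T^2))$, and $\Psi + v_\infty$ is precisely the solution of~\eqref{LSW} in Duhamel form.

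The main obstacle, shared with Theorem~\ref{THM:main}, is to give meaning to the product of the distribution $e^{i\be \Psi_N} \in W^{-\al,\infty}$ with the bounded function $e^{i\be v_N}$. This is handled exactly as there: the fractional chain rule transfers positive Sobolev regularity from $v_N$ to $e^{i\be v_N}$, so the product pairs meaningfully against the dual Strichartz space. Everything else is strictly lighter than in Theorem~\ref{THM:main}, since here the \emph{smallness} of $e^{i\be \Psi_N}$ replaces---at the convergence step---the mere boundedness of $\U_N$ and directly forces the nonlinearity to disappear in the limit.
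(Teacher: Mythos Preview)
Your proposal is correct and follows essentially the same route as the paper: the Da~Prato--Debussche decomposition $u_N = \Psi_N + v_N$, the identification $\sin(\beta u_N) = \gamma_N^{-1}\Im(\Theta_N e^{i\beta v_N})$, the Strichartz fixed-point argument from Theorem~\ref{THM:main} (with $0<\gamma_N^{-1}\le 1$ guaranteeing $N$-independent local existence and the bound \eqref{Z10}), and the conclusion via Lemma~\ref{LEM:psi}. The only minor difference is in showing that $\gamma_N^{-1}\Theta_N \to 0$: the paper separates the scalar $\gamma_N^{-1}(t)$ from $\Theta_N$ by H\"older in time and computes directly $\|\gamma_N^{-1}\|_{L^{1/\delta}_T} \lesssim (\log N)^{-1}$, which is slightly more direct (and quantitative) than your split $\gamma_N^{-1}(\Theta_N-\Theta)+\gamma_N^{-1}\Theta$ with dominated convergence, though both arguments are valid.
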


We point out that the nature of ``triviality''
in  Proposition~\ref{PROP:triviality} is slightly different 
from that in the work  \cite{HRW,OPT1,OOR}
with the cubic nonlinearity, where solutions $u_N$
to the unrenormalized equation with regularized noises
tend to $0$ as we remove the regularization.
This  difference comes from the different renormalization procedures;
 our renormalization for the sine-Gordon model
appears as a multiplication by the renormalization constant.
On the other hand,  in the case of the cubic nonlinearity, 
the renormalization is implemented by insertion of a linear term $\g_N u_N$
with a suitable divergent renormalization constant $\g_N$,
thus modifying the linear part of the equation.
In particular, in the case of the wave equation  
the modified linear propagator introduces faster and faster oscillations
and thus the contribution from the deterministic initial data $(u_0, u_1)$
also vanishes.
See \cite{OPT1,OOR}.

Following
the previous work \cite{HRW,OPT1,OOR}, the main idea for proving Proposition \ref{PROP:triviality}
is to artificially insert the renormalization constant $\g_N$
in the unrenormalized equation \eqref{SSG2}.
By using the decomposition $u_N = \Psi_N + v_N$
as before, 
we  have
\begin{equation*}
\begin{cases}
\dt^2v_N + (1-\Dl)v_N + \g_N^{-1} \Im\big(\U_N e^{i\be v_N}\big)=0\\
(v_N,\dt v_N)\big|_{t=0}=(u_0,u_1).
\end{cases}
\end{equation*}

\noi
Proposition \ref{PROP:triviality}
then follows from 
essentially repeating
the proof of Theorem \ref{THM:main}
along with the asymptotic behavior \eqref{gN}
of the renormalization constant.

\begin{remark}\label{REM:Gibbs}\rm

In the case of a  polynomial nonlinearity, 
the local-in-time analysis in \cite{GKO} essentially applies 
to 
the following hyperbolic $\Phi^{2k+2}_2$-model:\footnote{This is the so-called
``canonical'' stochastic quantization equation.  See \cite{RSS}.}
\begin{equation}\label{SdNLW}
\dt^2u + \dt u+ (1-\Dl)u + u^{2k+1} =\sqrt{2}\xi
\end{equation}

\noi
with random initial data distributed by the Gibbs measure
(= $\Phi^{2k+2}_2$-measure).
Then, Bourgain's invariant measure argument \cite{BO94, BO96}
allows us to establish
almost sure global well-posedness of 
(a renormalized version of) \eqref{SdNLW}
and invariance of the Gibbs measure.
See~\cite{GKOT, ORTz}.

By adding the damping term $\dt u$ to \eqref{SSG}, 
we obtain the following 
``canonical'' stochastic sine-Gordon model:
\begin{equation}\label{SdSG}
\dt^2u+ \dt u + (1-\Dl)u + \g\sin(\be u) = \sqrt{2}\xi.
\end{equation}

\noi
This equation is a hyperbolic counterpart
of the stochastic quantization equation
for the quantum sine-Gordon model;
the equation \eqref{SdSG}  formally preserves the Gibbs measure $\mu$
given by\footnote{The Gibbs measure $\mu$ obviously requires a proper renormalization on $\T^2$.}
\[ d\mu = Z^{-1} \exp\bigg( - \frac12 \int_{\T^2} |\jb{\nb} u|^2 dx-
  \frac12 \int_{\T^2} (\dt  u)^2 dx + \frac{\g}{\be}\int_{\T^2} \cos (\be u) dx\bigg) du \otimes d (\dt u).
 \]

\noi
Hence, it is of great importance to study the well-posedness issue of \eqref{SdSG}.
For this model, 
by applying an analogous  renormalization
and repeating the analysis in Section \ref{SEC:2} below, 
we see that  the renormalization constant is time-independent
(as in the parabolic model~\cite{HS, CHS}).
In particular, the regularity depicted in Proposition \ref{PROP:Ups}
for the relevant imaginary Gaussian chaos $\U$
will be time-independent in this case,
implying that there exists an infinite number of threshold
values for $\be^2$ as in the parabolic case.
By drawing an analogy to 
the $\Phi^3_d$-model (see \cite{HS}), 
we see that  the $\Phi^3_3$-model corresponds to $\be^2 = 2\pi$.
In~a recent preprint~\cite{ORSW2},
we establish
almost sure global well-posedness of (a renormalized version of) \eqref{SdSG}
and invariance of the Gibbs measure
for  $0 < \be^2 < 2\pi $.

In view of a recent work~\cite{GKO2},
as for the stochastic nonlinear wave equation 
on $\T^3$
with a quadratic nonlinearity, 
we expect  that it would require a significant effort
(such as establishing multilinear smoothing and introducing appropriate paracontrolled operators)
 to reach 
 the first non-trivial threshold $\be^2 = 2\pi$ for
 the hyperbolic sine-Gordon model~\eqref{SdSG}.\footnote
{We point that the hyperbolic $\Phi^3_3$-model treated 
in \cite{GKO2} is much harder than the parabolic $\Phi^3_3$-model
studied in \cite{EJS}, where a standard application of the Da Prato-Debussche trick suffices.
Due to the non-polynomial nature of the problem, 
we expect the hyperbolic sine-Gordon model \eqref{SdSG} with $\be^2 = 2\pi$ to be even harder than 
the hyperbolic $\Phi^3_3$-model.}

\end{remark}

\begin{remark}\rm

 Albeverio, Haba, and Russo \cite{AHR} considered the following stochastic nonlinear wave equation:
\begin{equation}\label{gSNLW}
\dt^2u +(1-\Dl) u + f(u)=\xi
\end{equation}
with zero initial data, 
where $f$ is assumed to be smooth and bounded. 
Working within the framework of Colombeau generalized functions,
 they showed that 
the solutions $u_N$ to~\eqref{gSNLW} with regularized noises
tend to the solution $u$  to the stochastic linear wave equation~\eqref{LSW}.
It is, however, not clear what the meaning of the solutions constructed 
in \cite{AHR} and how they relate to the construction in this paper.
Note that in our triviality result (Proposition~\ref{PROP:triviality}), 
we establish the convergence in 
the natural space
$C([0, T]; H^{-\eps}(\T^2))$.
See also the comments in \cite{HS, CHS} on the work \cite{AHR2}
in the parabolic setting.

\end{remark}

\smallskip

\noi
{\bf Notations:} Before proceeding further, we introduce some notations here.
We set
\begin{align}
e_n(x) \stackrel{\textup{def}}{=} \frac1{2\pi}e^{in\cdot x}, 
\qquad  n\in\Z^2, 
\label{exp}
\end{align}
 
\noi 
for the orthonormal Fourier basis in $L^2(\T^2)$. 
Given $s \in \R$, 
we define the Bessel potential $\jb{\nb}^s$ of order $-s$ as a Fourier multiplier operator
given by 
\[ \jb{\nb}^s f = \F^{-1} (\jb{n}^s \ft f (n)), \]

\noi
where $\F^{-1}$ denotes the inverse Fourier transform  and  $\jb{\,\cdot\,} = (1+|\cdot|^2)^\frac{1}{2}$. 
Then, 
we  define the  Sobolev space  $H^s (\T^2)$ by  the norm:
\[
\|f\|_{H^s(\T^2)} = \|\jb{\nb}^s f \|_{L^2} = \|\jb{n}^s\ft{f}(n)\|_{\l^2(\Z^2)}.
\]

\noi
We also set 
\begin{equation*}
\H^{s}(\T^2)  \stackrel{\textup{def}}{=} H^{s}(\T^2) \times H^{s-1}(\T^2).
\end{equation*}

\noi
We use short-hand notations such as
 $C_TH^s_x = C([0, T]; H^s(\T^2))$
 and $L^p_\o = L^p(\O)$.

For $A, B > 0$, we use $A\lesssim B$ to mean that
there exists $C>0$ such that $A \leq CB$.
By $A\sim B$, we mean that $A\lesssim B$ and $B \lesssim A$.
We also use  a subscript to denote dependence
on an external parameter; for example,
 $A\les_{\al} B$ means $A\le C(\al) B$,
  where the constant $C(\al) > 0$ depends on a parameter $\al$. 
Given two functions $f$ and $g$ on $\T^2$, 
we write  
\begin{align}
f\approx g
\label{approx}
\end{align}

\noi
 if there exist some constants $c_1,c_2\in\R$ such that $f(x)+c_1 \leq g(x) \leq f(x)+c_2$ for any $x\in\T^2\backslash \{0\}
 \cong [-\pi, \pi)^2 \setminus\{0\}$.
Given $A, B \geq 0$, we also set
 $A\vee B = \max(A, B)$
 and
 $A\wedge B = \min(A, B)$.

 \section{On the imaginary Gaussian multiplicative chaos}
 \label{SEC:2}

In this section, we study the regularity and convergence properties of 
the imaginary complex Gaussian chaos $\U_N
= \, :\!e^{i\be\Psi_N}\!:$ defined in \eqref{Ups} 
and present a proof of 
Proposition~\ref{PROP:Ups}.
As pointed out in the introduction, 
the main difficulty arises in  that the processes $\U_N$  do not belong 
to any Wiener chaos of finite order.
This forces us to 
 estimate all the higher moments by hand.
As in \cite{HS}, 
the main ingredient 
for proving Proposition~\ref{PROP:Ups} is 
the bound~\eqref{Ebd} 
 in  Lemma~\ref{LEM:prod}, 
 exhibiting a certain  cancellation property.
This  
 allows us to simplify the expressions of the moments.

 \subsection{Preliminary  results}

We first recall the Poisson summation formula
 (see \cite[Theorem 3.2.8]{Gra1}).
In our context, it reads as follows.
Let $f \in L^1(\R^d)$
such that (i) $|f(x)| \leq \jb{x}^{-d - \dl}$ for some $\dl > 0$
and any $x \in \R^d$
and (ii) $\sum_{n \in \Z^d} |\ft f(n)| < \infty$, 
where $\ft f $ denotes the Fourier transform of $f$ on $\R^d$
defined by 
\[ \ft f(\xi) = \frac 1{(2\pi)^\frac{d}{2}} \int_{\R^d} f(x) e^{-i\xi \cdot x} d x.\]

\noi
Then, we have 
\begin{equation}\label{Poisson}
\sum_{n\in\Z^d}\ft f(n)e_n(x) = \sum_{m\in\Z^d}f(x+2\pi m)
\end{equation}

\noi
for any $x \in \R^d$.

We also recall the following calculus  lemma from \cite[Lemma 3.2]{HRW}.
\begin{lemma}\label{LEM:HRW}
There exists  $C>0$ such that
\[\bigg|\sum_{\substack{n \in \Z^2\\|n|\leq R}}\frac{1}{a+|n|^2}-\pi\log\bigg(1+\frac{R^2}a\bigg)\bigg|\leq \frac{C}{\sqrt{a}}\min\bigg(1,\frac{R}{\sqrt{a}}\bigg)\]

\noi
for any  $a,R\geq 1$. 
\end{lemma}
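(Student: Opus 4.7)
The plan is to treat the lattice sum as a Riemann sum for the radially symmetric integral $\int_{|x|\le R}\frac{dx}{a+|x|^2}$. First, I would evaluate this integral explicitly in polar coordinates, obtaining $2\pi\int_0^R \frac{r\,dr}{a+r^2} = \pi\log(1+R^2/a)$, so that the claim reduces to controlling the discrepancy between the lattice sum and this integral.

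Second, I would partition $\R^2$ into unit cells $Q_n = n + [-\tfrac12,\tfrac12]^2$ centered at $n\in\Z^2$ and, for each $n$, compare $\frac{1}{a+|n|^2}$ with $\int_{Q_n}\frac{dx}{a+|x|^2}$. Applied to $f(x)=\frac{1}{a+|x|^2}$, which satisfies $|\nabla f(x)| = \frac{2|x|}{(a+|x|^2)^2}$, the mean value inequality bounds each cell's contribution by $\sup_{Q_n}|\nabla f|\les\frac{\jb{n}}{(a+|n|^2)^2}$. Summing over $|n|\le R$ and dominating the resulting sum by a radial integral gives, after the rescaling $r=\sqrt{a}\,s$,
\[
\sum_{|n|\le R}\frac{\jb{n}}{(a+|n|^2)^2} \les \int_0^R \frac{r^2\,dr}{(a+r^2)^2} = \frac{1}{\sqrt{a}} \int_0^{R/\sqrt{a}} \frac{s^2\,ds}{(1+s^2)^2},
\]
which is bounded by $\frac{C}{\sqrt{a}}\min(1,R/\sqrt{a})$ (and, in fact, is even smaller in the regime $R\le\sqrt{a}$, where it scales like $R^3/a^2$).

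Third, I would handle the mismatch between the disk $\{|x|\le R\}$ and the union $\bigcup_{|n|\le R}Q_n$: their symmetric difference is contained in an annulus of $O(1)$ thickness around the circle $|x|=R$, on which $\frac{1}{a+|x|^2} \sim \frac{1}{a+R^2}$, so the resulting contribution is $\les \frac{R}{a+R^2}$. A two-case check yields $\frac{R}{a+R^2}\les \frac{1}{\sqrt{a}}\min(1,R/\sqrt{a})$, since for $R\le\sqrt{a}$ one has $\frac{R}{a+R^2}\le R/a = \frac{1}{\sqrt{a}}\cdot\frac{R}{\sqrt{a}}$, and for $R\ge\sqrt{a}$ one has $\frac{R}{a+R^2}\le 1/R\le 1/\sqrt{a}$. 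Adding the bulk (gradient) and boundary (annulus) contributions then gives the claimed estimate.

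The main obstacle I anticipate is matching the $\min(1,R/\sqrt{a})$ structure uniformly across the two regimes $R\le\sqrt{a}$ and $R\ge\sqrt{a}$. The interior Riemann-sum error is actually strictly better than $\frac{1}{\sqrt{a}}\min(1,R/\sqrt{a})$, so the stated bound is saturated by the boundary-annulus term rather than by the gradient comparison. One also needs to be slightly careful about the cell at the origin (where the gradient bound $\frac{\jb{n}}{(a+|n|^2)^2}$ must be replaced by the uniform $O(a^{-2})$ bound on $Q_0$) and about cells that straddle the circle $|x|=R$ when combining the two contributions.
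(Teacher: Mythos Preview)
Your approach is correct: the standard Riemann-sum comparison, gradient estimate on unit cells, and the boundary-annulus correction combine exactly as you describe to give the bound $\frac{C}{\sqrt{a}}\min(1,R/\sqrt{a})$. Note, however, that the paper does not supply its own proof of this lemma; it simply quotes the statement from \cite[Lemma~3.2]{HRW}, so there is no in-paper argument to compare against---your sketch would serve as a self-contained proof.
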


In  \cite{OTh2, GKO, GKO2}, the analysis of stochastic objects
 was carried out on the Fourier side.
 It turns out that for our problem, 
 it is more convenient to work on the physical side.
 For this reason, we recall several 
 facts from harmonic analysis. 
 For $d\in\N$ and $\al>0$, 
 we denote  by $\jb{\nabla}^{-\al} = (1-\Dl)^{-\frac{\al}2}$
 the Bessel potential of order $\al$. 
 This operator on $\T^d$ is given by  a convolution with the 
 following distribution:
 \begin{align}\label{Bessel}
 J_{\al}(x)  \deff \lim_{N \to \infty}\frac{1}{2\pi} \sum_{n \in\Z^d} \frac{\chi_N(n)}{\jb{n}^{\al}} e_n(x),
 \end{align}
 
 \noi
 where $\chi_N$ is the smooth cutoff function 
 onto the frequencies $\{|n|\les N\}$ defined in \eqref{chi}.
 It is then known that for $0<\al<d$, 
 the  distribution $J_\al$ behaves like $|x|^{-d + \al}$ modulo an additive smooth function on $\T^d$.
 More precisely, we have the following lemma.
 
 \begin{lemma}
 \label{LEM:Bessel}
 Let $0 < \alpha < d$. Then,  the function $J_\al $ in \eqref{Bessel}
 is smooth away from the origin and integrable on $\T^d$.
 Furthermore, there exist a constant $c_{\al,d}$ and a smooth function $R$ on $\T^d$ 
 such that 
\[ J_\al(x) = c_{\al,d} |x|^{\al - d} +R(x)\]

\noi
for all $x\in\T^d\setminus\{0\}
 \cong [-\pi, \pi)^d \setminus\{0\}$.
 \end{lemma}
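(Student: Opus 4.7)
The plan is to identify $J_\alpha$ as the periodization on $\T^d$ of the Bessel potential kernel on $\R^d$, and then extract the singularity $c_{\alpha,d}|x|^{\alpha-d}$ from the zero-lattice summand.

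First, I recall the classical facts about the Bessel potential on $\R^d$. The operator $(1-\Dl)^{-\alpha/2}$ on $\R^d$ is convolution with
\[G_\alpha(x) \deff \frac{1}{(2\pi)^{d/2}}\F^{-1}\bigl(\jb{\xi}^{-\alpha}\bigr)(x),\]
which, for $0 < \alpha < d$, is a positive radial function, smooth on $\R^d\setminus\{0\}$, decaying exponentially at infinity together with all its derivatives, and satisfying the near-origin expansion
\[G_\alpha(x) = c_{\alpha,d}\,|x|^{\alpha - d} + \widetilde R(x), \qquad 0 < |x| \le 1,\]
for some constant $c_{\alpha,d} > 0$ and some smooth function $\widetilde R$ on a neighborhood of $0$. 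These statements are classical; they follow from the subordination representation $\jb{\xi}^{-\alpha} = c_\alpha\int_0^\infty t^{\alpha/2 - 1} e^{-t(1+|\xi|^2)}dt$ and explicit inverse Fourier computation.

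Second, I apply the Poisson summation formula \eqref{Poisson} to the Schwartz-class function $f_N(\xi) \deff (2\pi)^{-d/2}\chi_N(\xi)\jb{\xi}^{-\alpha}$, whose inverse Fourier transform is a smooth rapidly-decaying truncation $G_\alpha^{(N)}$ of $G_\alpha$. Tracking the normalization constants carried by $e_n$ in \eqref{exp} and by the Fourier transform, this yields
\[\frac{1}{2\pi}\sum_{n\in\Z^d}\frac{\chi_N(n)}{\jb{n}^\alpha}e_n(x) = \sum_{m\in\Z^d} G_\alpha^{(N)}(x+2\pi m)\]
for every $x\in\T^d$. I then pass to the limit $N\to\infty$. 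Since $\chi_N\to 1$ monotonically and the inverse Fourier integral defining $G_\alpha^{(N)}$ converges to the one defining $G_\alpha$ locally uniformly on $\R^d\setminus\{0\}$ (by integration by parts in $\xi$, exploiting $|x|$ bounded away from $0$, together with dominated convergence), while $G_\alpha$ and all its derivatives decay exponentially at infinity, the right-hand side converges uniformly on compact subsets of $\T^d\setminus\{0\}$ to $\sum_{m\in\Z^d} G_\alpha(x+2\pi m)$. Hence the limit $J_\alpha$ in \eqref{Bessel} exists pointwise on $\T^d\setminus\{0\}$ and
\[J_\alpha(x) = G_\alpha(x) + R_1(x), \qquad R_1(x) \deff \sum_{m\in\Z^d\setminus\{0\}} G_\alpha(x+2\pi m).\]
Since $|x+2\pi m| \ge \pi|m|$ for $x\in[-\pi,\pi)^d$ and $|m|\ge 1$, termwise differentiation of $R_1$ is justified by the exponential decay of $G_\alpha$, so $R_1\in C^\infty(\T^d)$.

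Finally, choosing a smooth cutoff $\eta\in C^\infty_c$ supported in a small neighborhood $U$ of $0$ in $\T^d\cong[-\pi,\pi)^d$ with $\eta\equiv 1$ near $0$, and $U$ small enough that the expansion of $G_\alpha$ holds on $U\setminus\{0\}$, I set
\[R(x) \deff \eta(x)\widetilde R(x) + \bigl(1-\eta(x)\bigr)\bigl(G_\alpha(x) - c_{\alpha,d}|x|^{\alpha-d}\bigr) + R_1(x).\]
Since $G_\alpha$ is smooth on $\T^d\setminus\{0\}$ and $(1-\eta)(x)|x|^{\alpha-d}$ vanishes in a neighborhood of $0$, the function $R$ is smooth on all of $\T^d$, and by construction $J_\alpha(x) = c_{\alpha,d}|x|^{\alpha-d} + R(x)$ on $\T^d\setminus\{0\}$. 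Integrability of $J_\alpha$ on $\T^d$ then follows from integrability of $|x|^{\alpha-d}$ near the origin, which holds since $\alpha>0$. The main obstacle is the justification of the Poisson summation step and the limit $N\to\infty$, since $\sum_{n\in\Z^d}\jb{n}^{-\alpha}$ does not converge absolutely when $\alpha\le d$; the cutoff $\chi_N$ is precisely what allows us to reduce to the Schwartz-class setting in which \eqref{Poisson} applies directly.
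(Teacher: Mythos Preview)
Your proof is correct and follows essentially the same route as the paper: invoke the classical properties of the Bessel kernel $K_\alpha$ on $\R^d$ (smoothness away from the origin, exponential decay, and the near-origin asymptotic $K_\alpha(x)=c_{\alpha,d}|x|^{\alpha-d}+o(1)$ from \cite{AS,Gra2}), then apply the Poisson summation formula \eqref{Poisson} to the Schwartz function $\chi_N(\xi)\jb{\xi}^{-\alpha}$ and pass to the limit $N\to\infty$. Your write-up in fact supplies more detail than the paper's sketch, namely the explicit patching of the smooth remainder $R$ via a cutoff and the justification of the limit; the paper simply refers to \cite[Example~3.2.10]{Gra1} for this step.
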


 \begin{proof}
 
 Let $K_{\al}$ be the convolution kernel 
 for the Bessel potential on $\R^d$.
 Then, 
it follows from \cite[Proposition 1.2.5]{Gra2}
 that $K_\al$ is a smooth function on $\R^d\setminus\{0\}$
 and decays exponentially as $|x|\to \infty$.
 Moreover, 
 the following  asymptotic behavior holds:
 \[K_\al(x) = c_{\al,d} |x|^{\al - d} + o(1)\]
 
 \noi
  as $x \to 0$.
 See (4,2) in \cite{AS}.
Then,  this lemma
follows from 
applying the Poisson summation formula \eqref{Poisson}
with a frequency truncation $\P_N$ and
then taking $N \to \infty$
as in \cite[Example 3.2.10]{Gra1}.
See also \cite{BO}.
 \end{proof}

Next, we study the Green's function for $1- \Dl$.
Recall from \cite[Proposition 1.2.5]{Gra2}
that 
the Green's function $G_{\R^2}$ for $1 - \Dl$
on $\R^2$
is a smooth function on $\R^2 \setminus \{0\}$
and  decays exponentially as $|x| \to \infty$.
Furthermore, it satisfies
\begin{align}
G_{\R^2}(x) = -\frac1{2\pi} \log|x| + o(1) 
\label{G1}
\end{align}

\noi
as $x \to 0$.
See  (4,2) in \cite{AS}.
Now, 
let $G$ be the Green's function
for $1-\Dl$ on $\T^2$.
In view of our normalization \eqref{exp}, we then have
\begin{align}
G \deff (1-\Delta)^{-1} \dl_0 = \frac{1}{2\pi}\sum_{n\in\Z^2}\frac 1{\jb{n}^{2}}e_n,
\label{G1a}
\end{align}

\noi
where the sum is to be interpreted in the limiting sense as in \eqref{Bessel}.
Then, by applying the Poisson summation formula \eqref{Poisson}
(with a frequency truncation $\P_N$ and taking $N \to \infty$)
together with the asymptotics \eqref{G1}, 
we obtain
\begin{align}
G(x) = -\frac1{2\pi} \log|x| +  R(x), \qquad x \in \T^2 \setminus \{0\}, 
\label{G2}
\end{align}

\noi
for some smooth function $ R$.

In the next lemma, we establish an analogous 
behavior for the frequency-truncated Green's function $\P_N^2 G$ given by 
\begin{align}
\P_N^2G(x) = \frac{1}{2\pi}\sum_{n\in\Z^2}\frac{\chi_N^2(n)}{\jb{n}^2}e_n(x).
\label{G3}
\end{align}

\noi
See also Lemma 3.7 in  \cite{HS}.

\begin{lemma}\label{LEM:Green}
For any $N\in\N$ and $x\in\T^2\setminus \{0\}$,
we have 
\[\P_N^2G(x) \approx -\frac{1}{2\pi} \log\big(|x|+N^{-1}\big),\]

\noi
where the notation $\approx$ is as in \eqref{approx}.
\end{lemma}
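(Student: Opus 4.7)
The plan is to pass from the Fourier series defining $\P_N^2 G$ on $\T^2$ to an analogous Fourier integral on $\R^2$ via the Poisson summation formula \eqref{Poisson}, and then analyze that integral by splitting the frequency domain at the scale $r^{-1}$, where $r = |x|+N^{-1}$.

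First, I would apply \eqref{Poisson} to the compactly supported smooth function $\xi\mapsto\chi_N^2(\xi)\jb{\xi}^{-2}$ on $\R^2$, obtaining a representation
\begin{equation*}
\P_N^2 G(x) = \sum_{m\in\Z^2} K_N(x+2\pi m),
\end{equation*}
where $K_N$ is (up to a fixed multiplicative constant) the inverse Fourier transform on $\R^2$ of $\chi_N^2/\jb{\cdot}^2$. Repeated integration by parts in $\xi$, using $|\pa^\al\chi_N^2(\xi)|\lesssim N^{-|\al|}$ and the compact support of $\chi_N$, yields $|K_N(y)|\lesssim\jb{y}^{-3}$ uniformly in $N$. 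Since $|x+2\pi m|\gtrsim 1$ for $m\neq 0$ and $x\in[-\pi,\pi)^2$, the contribution of the $m\neq 0$ terms in the Poisson sum is bounded by an absolute constant, and the task reduces to showing $K_N(x)\approx -\frac{1}{2\pi}\log r$ uniformly in $N$ and $x$.

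Next, I would compute $K_N(x)$ by splitting the integration domain into the low-frequency region $L = \{|\xi|\leq r^{-1}\}$ and its complement $H=\{|\xi|>r^{-1}\}\cap\supp\chi_N$. On $L$, one has $r^{-1}\leq N$ so $\chi_N^2\equiv 1$, and since $|\xi\cdot x|\leq r^{-1}|x|\leq 1$ the oscillation is controlled; a polar computation then gives
\begin{equation*}
\int_L \frac{e^{i\xi\cdot x}}{1+|\xi|^2}\,d\xi = -2\pi\log r + O(1).
\end{equation*}
On $H$ I would distinguish two cases. If $|x|\geq N^{-1}$, integrating by parts twice via $e^{i\xi\cdot x}=-|x|^{-2}\Delta_\xi e^{i\xi\cdot x}$ produces an integrand bounded by $|x|^{-2}\jb{\xi}^{-4}$ (the derivatives hitting $\chi_N^2$ being harmless because they are $O(N^{-|\al|})$ and supported on $\{|\xi|\sim N\}$), whose integral over $\{|\xi|\geq r^{-1}\}$ is $O(|x|^{-2}r^2)=O(1)$, and the boundary terms at $|\xi|=r^{-1}$ are likewise $O(1)$. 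If instead $|x|<N^{-1}$, then $r^{-1}\approx N$, so $H$ is a thin annular shell of logarithmic measure $O(1)$, and the trivial bound $(1+|\xi|^2)^{-1}\lesssim\jb{\xi}^{-2}$ gives $O(1)$ directly. Hence $K_N(x)\leq -\frac{1}{2\pi}\log r + C$, proving the upper bound.

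The matching lower bound follows by discarding $H$ and further restricting the $L$-integral to $\{|\xi|\leq c r^{-1}\}$ for $c$ small, where $\cos(\xi\cdot x)\geq\tfrac12$; redoing the polar computation on this region produces $-\frac{1}{2\pi}\log r + O(1)$ from below. The main technical point is the high-frequency estimate when $|x|\geq N^{-1}$, where one must verify that the derivatives of the cutoff $\chi_N^2$ do not spoil the $\jb{\xi}^{-4}$ decay after applying $\Delta_\xi$; the smoothness of $\chi$ together with the scaling $\chi_N(\xi)=\chi(N^{-1}\xi)$ guarantees this. Assembling the $L$ and $H$ contributions with the uniformly bounded Poisson-summation tail establishes the claimed $\approx$ relation.
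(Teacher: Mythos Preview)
Your proof is correct but follows a different route from the paper. The paper splits into cases at the level of the torus sum: when $|x|\lesssim N^{-1}$ it uses the mean value theorem to replace $\P_N^2 G(x)$ by $\P_N^2 G(0)$ and then invokes Lemma~\ref{LEM:HRW}; when $|x|\gtrsim N^{-1}$ it compares $\P_N^2 G$ with the full Green's function $G$ by applying Poisson summation to the \emph{difference} $(\P_N^2-1)G$ (whose symbol is supported on $\{|\xi|\gtrsim N\}$) and then appeals to the known asymptotics~\eqref{G2}. You instead apply Poisson summation once to $\P_N^2 G$ itself and analyze the single $\R^2$-integral $K_N$ directly, splitting in frequency at the scale $r^{-1}$. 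Your approach is more self-contained---it uses neither Lemma~\ref{LEM:HRW} nor the asymptotics~\eqref{G2}---at the price of handling the boundary terms at $|\xi|=r^{-1}$ explicitly. Two harmless imprecisions: the uniform bound $|K_N(y)|\lesssim\jb{y}^{-3}$ fails near $y=0$ (since $K_N(0)\sim\log N$), but you only use it for $|y|\gtrsim 1$; and $\chi_N^2\equiv 1$ on $L$ need not hold when $r^{-1}>N/2$, but the discrepancy lives on an annulus contributing $O(1)$. Your separate lower-bound paragraph is also redundant, since the $O(1)$ errors on both $L$ and $H$ are already two-sided.
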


\begin{proof}
Fix $x\in\T^2 \setminus\{0\}\cong [-\pi,\pi)^2\setminus\{0\}$. 
We separately consider   the cases $|x|\lesssim N^{-1}$
and $|x|\ges  N^{-1}$. 

\smallskip
\noi
$\bullet$
{\bf Case 1:} 
We first consider  the case $|x|\lesssim N^{-1}$. 
By  the mean value theorem, we have 
\begin{align}
\big|\P_N^2G(x) - \P_N^2G(0)\big| 
\sim \bigg|\sum_{n\in \Z^2}\frac{\chi_N^2(n)}{\jb{n}^{2}}(e_n(x)-e_n(0))\bigg| 
\les \sum_{|n|\les N} \frac{|x|}{\jb{n}} 
\les N |x| \les 1.
\label{A1}
\end{align}

\noi
On the other hand, 
from Lemma~\ref{LEM:HRW},
we have
\begin{align}
\P_N^2G(0) = \frac{1}{4\pi^2}\sum_{n\in\Z^2}\frac{\chi_N^2(n)}{\jb{n}^2}=\frac{1}{2\pi}\log N + O(1)
\label{A2}
\end{align}

\noi
as
$N\to \infty$.
Hence, from \eqref{A1} and \eqref{A2}, we obtain
\begin{align*}
\P_N^2G (x) \approx -\frac1{2\pi}\log N^{-1}
\approx -\frac1{2\pi}\log \big(|x| + N^{-1}\big)
\end{align*}
 in this case.

\smallskip
\noi
$\bullet$
{\bf Case 2:} 
Next, we consider the case $|x|\ges N^{-1}$.
Let  $\rho\in C^{\infty}(\R^2)$ with a compact support in $\T^2\cong [-\pi,\pi)^2$ with 
$\int\rho(y)dy = 1$. 
Given $M \geq N$, 
let $\rho_{M}$ denote a mollification kernel of scale $M^{-1}$
given by $\rho_{M}(x)=M^2\rho(Mx)$. 
Then, 
since $G$ and $\P_N^2G$ are smooth  away from the origin,  we have
\begin{align}
\P_N^2G(x)-G(x)=\lim_{M\rightarrow\infty}\rho_{M}*\big[(\P_N^2-1)G\big](x).
\label{A3}
\end{align}

\noi
Then, by the Poisson summation formula \eqref{Poisson}, we write
\begin{align}
\begin{split}
\rho_{M}*\big[(\P_N^2-1)G\big](x) 
& = \frac{1}{2\pi}\sum_{n\in\Z^2}\ft \rho_{M}(n) \frac{\chi_N^2(n)-1}{\jb{n}^{2}}e_n(x)\\
& =\frac 1{2\pi} \sum_{m\in\Z^2}f_{N,M}(x+2\pi m),
\end{split}
\label{A4}
\end{align}

\noi
where  $f_{N, M} \in \S(\R^2)$ is defined by its Fourier transform:
\[\ft f_{N,M}(\xi) = \ft \rho_{M}(\xi)(\chi_N^2(\xi)-1)\jb{\xi}^{-2}\in\S(\R^2).\]

By applying integrations by parts with the properties of $\chi_N$ and $\rho_{M}$, 
in particular, the fact that the integrand is essentially\footnote
{Namely, thanks to the fast decay of $\ft \rho_{M}(\xi)$, 
the  contribution to the integral in the second line of  \eqref{A5} from $\{ |\xi| \gg M\}$
can be easily bounded by $M^{-2k}$ for any $k \in \N$.}
 supported on $\{N \les |\xi|\les M\}$, 
we obtain
\begin{align}
\begin{split}
\big|f_{N,M}(x+2\pi m)\big| 
&  = \frac{1}{2\pi} \bigg|\int_{\R^2}\ft f_{N,M}(\xi)e^{i\xi \cdot (x+2\pi m)}d\xi\bigg|\\
&\sim |x+2\pi m|^{-2k}
\bigg|\int_{\R^2}\Dl_\xi^k\big(\ft\rho_{M}(\xi)(\chi_N^2(\xi)-1)
\jb{\xi}^{-2}\big)e^{ i\xi\cdot (x+2\pi m)}d\xi\bigg|\\
& \les N^{-2k}|x+2\pi m|^{-2k}
\end{split}
\label{A5}
\end{align}

\noi
for any $k\in\N$, uniformly in  $M\geq N \geq 1$.
When a derivative hits the second factor $\chi_N^2(\xi)-1$, 
we used the fact that the support in that case is essentially contained in 
$\{|\xi|\sim N\}$.
When no derivative hits the second factor $\chi_N^2(\xi)-1$ 
or the third factor $\jb{\xi}^{-2}$, 
namely, when all the derivatives hit the first factor
$\ft \rho_{M}(\xi)=\ft\rho(M^{-1}\xi)$, 
we used the following bound:
\begin{align*}
M^{-2k} \log \frac{M}{N}
= 
N^{-2k}  \bigg(\frac{N}{M}\bigg)^{2k} \log \frac{M}{N}
 \les N^{-2k}.
\end{align*}

By choosing $k > 1$, 
we  perform the summation over $m \in \Z^2$ 
 and take the limit $M\rightarrow \infty$ in~\eqref{A4}.
From \eqref{A3}, \eqref{A4}, and \eqref{A5}, 
we obtain
\begin{align}
\begin{split}
|\P_N^2G(x)-G(x)|
& \les  N^{-2k}|x|^{-2k} + \sum_{m\neq 0}N^{-2k}|m|^{-2k}  \les N^{-2k}(|x|^{-2k}+1)\\
& \les (N|x|)^{-2k}
\les 1,
\end{split}
\label{A6}
\end{align}

\noi
under our assumption  $|x|\gtrsim N^{-1}$, 
where we used that $|x+2\pi m|\gtrsim |m|$ for $x\in\T^2\cong  [-\pi,\pi)^2$ and $m\neq 0$.
Finally, from \eqref{G2} and \eqref{A6}, we obtain
\[\P_N^2G (x)\approx
G(x)  \approx  -\frac{1}{2\pi}\log |x|
 \approx -\frac{1}{2\pi}\log\big(|x|+N^{-1}\big).\]

\noi
This completes the proof of Lemma \ref{LEM:Green}.
\end{proof}

\begin{remark}\label{REM:G1}\rm
Let $N_2 \geq N_1 \geq 1$.
Then, by slightly modifying the proof of Lemma \ref{LEM:Green}
(namely, replacing $\chi_N^2$ by $\chi_{N_1} \chi_{N_2}$), 
we have
\begin{align}
\P_{N_1}\P_{N_2} G(x) \approx -\frac{1}{2\pi} \log\big(|x|+N_1^{-1}\big).
\label{A6b}
\end{align}

\noi
Similarly, we have
\begin{align}
|\P_{N_j}^2 G(x) - \P_{N_1}\P_{N_2} G(x)| 
\les  \big(1 \vee - \log\big(|x|+N_2^{-1}\big)\big) \wedge \big(N_1^{-1}|x|^{-1} \big)
\label{A6a}
\end{align}

\noi
for $j = 1, 2$.
We point out that the second bound by $N_1^{-1}|x|^{-1}$  
in \eqref{A6a}
follows from the computation up to \eqref{A6} in Case~2 
of the proof of Lemma \ref{LEM:Green}
(without dividing the argument, depending on the size of $|x|$).

\end{remark}

The next lemma plays a crucial role in proving Proposition \ref{PROP:Ups},
allowing us to reduce the  product
of $p(2p-1)$ factors to that of  $p$ factors.

  \begin{lemma}
  \label{LEM:prod}
 Let $\lambda>0$ and $p \in\N$.
 Given $j \in \{1, \dots, 2p\}$, 
 we set 
 $\eps_j = 1$ if $j$ is even and $\eps_j = -1$ 
 if $j$ is odd. 
  Let $S_p$ be the set of all permutations of the set $\{1,2,\dots, p\}$. Then, the following estimate holds:
  \begin{equation}
  \label{Ebd}
\prod_{1\leq j< k \leq 2p} \big(|y_j - y_k| + N^{-1}\big)^{ \eps_j\eps_k\lambda}
  \les \max_{\tau \in S_p} \prod_{1\leq j \leq p} \big(|y_{2j} - y_{2\tau(j)-1}| + N^{-1}\big)^{ -\lambda}
   \end{equation}

\noi
for any set  $\{y_j\}_{j=1,\dots,2p}$ of $2p$ points in $\T^2$
and any $N \in \N$.
  \end{lemma}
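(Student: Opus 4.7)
My plan is to prove the lemma by induction on $p$, using only two facts: the modified distance $d_{jk} := |y_j - y_k| + N^{-1}$ satisfies the triangle inequality on $\T^2$ (inherited from $|\,\cdot\,|$), and $|\eps_j|=1$. The base case $p=1$ is the trivial equality $d_{12}^{-\ld} \leq d_{12}^{-\ld}$, since $S_1$ is the trivial group and $2\tau(1)-1 = 1$.

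For the inductive step from $p-1$ to $p$, I would first pick $(a_0,b_0)\in\{1,\dots,p\}^2$ minimizing the opposite-parity distance $d_{2a_0-1,\,2b_0}$, and then relabel --- permuting only odd indices among themselves and even indices among themselves, which preserves the parity structure of the claim --- so that $(a_0,b_0)=(1,1)$, i.e.\ $d_{12} = \min_{a,b} d_{2a-1,\,2b}$. Using $\eps_1=-1$ and $\eps_2=+1$, the left-hand side factors as
\begin{equation*}
\prod_{1\leq j< k\leq 2p} d_{jk}^{\eps_j\eps_k \ld}
\;=\; d_{12}^{-\ld}\cdot\prod_{k=3}^{2p}\bigg(\frac{d_{2k}}{d_{1k}}\bigg)^{\eps_k\ld}\cdot \prod_{3\leq j<k\leq 2p} d_{jk}^{\eps_j\eps_k\ld}.
\end{equation*}
Applying the induction hypothesis to the $2(p-1)$ points $\{y_3,\dots,y_{2p}\}$ (with the natural parity-preserving re-indexing) produces some $\tau'\in S_{p-1}$ and a constant $C_{p-1}$ bounding the last product by $C_{p-1}\prod_{j=2}^p d_{2j,\,2\tau(j)-1}^{-\ld}$, where $\tau\in S_p$ is defined by $\tau(1):=1$ and $\tau(j):=\tau'(j)$ for $j\geq 2$; the isolated factor $d_{12}^{-\ld}$ then supplies exactly the missing $j=1$ term of $\prod_{j=1}^p d_{2j,\,2\tau(j)-1}^{-\ld}$.

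What remains is to absorb the middle product into a constant, and this is where the minimality of $d_{12}$ is used. For each $k\geq 3$, if $k$ is even then $d_{1k}$ is itself an opposite-parity distance, so $d_{12}\leq d_{1k}$ by minimality; combined with the triangle inequality $d_{2k}\leq d_{12}+d_{1k}\leq 2\,d_{1k}$, this gives $(d_{2k}/d_{1k})^{\eps_k\ld} = (d_{2k}/d_{1k})^{\ld}\leq 2^{\ld}$. If instead $k$ is odd, the symmetric argument (with the roles of $1$ and $2$ swapped, noting that $d_{2k}$ is now opposite-parity) yields $(d_{2k}/d_{1k})^{\eps_k\ld} = (d_{1k}/d_{2k})^{\ld}\leq 2^{\ld}$. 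Multiplying over $k\in\{3,\dots,2p\}$ contributes at most $2^{\ld(2p-2)}$, closing the induction with $C_p = 2^{\ld(2p-2)} C_{p-1}$; the specific $\tau$ just constructed witnesses the required inequality for this $p$, and the $\max_{\tau}$ bound in \eqref{Ebd} follows a fortiori. I expect the principal, rather mild, obstacle to be the combinatorial bookkeeping --- keeping the parity of indices straight throughout the relabeling and confirming that the concatenation $\tau'\mapsto\tau$ indeed yields a valid element of $S_p$ --- rather than any deep analytic difficulty.
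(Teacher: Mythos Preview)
Your proposal is correct and follows essentially the same inductive argument as the paper: both isolate the minimizing opposite-parity pair, apply the inductive hypothesis to the remaining $2(p-1)$ points, and absorb the cross-ratio factors using the triangle inequality together with the minimality assumption. The only cosmetic difference is that the paper relabels the minimizing pair to the top indices $(2p+1,2p+2)$ while you relabel it to the bottom indices $(1,2)$; your explicit definition of $\tau$ from $\tau'$ needs a small index shift (one really wants $\tau(j)=\tau'(j-1)+1$ under the natural re-indexing), but you already flag this bookkeeping as the one point requiring care.
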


In \cite{HS}, Hairer and Shen established an analogous result
for a more general class of ``potential'' function $\J_N(x-y)$
than $(|x-y|+N^{-1})^{\ld}$ considered above.
They exploited
 sophisticated multi-scale analysis 
 and charge cancellation. 
In particular, 
Lemma~\ref{LEM:prod} follows from 
Proposition~3.5 in~\cite{HS}.
Nonetheless, we decided to include a proof 
since 
the relevant computation 
is elementary and concise 
in our concrete setting.
In the proof below,  the charge cancellation 
due to the ``dipole'' formation appears in  \eqref{Era2} and \eqref{Era3}. 
We also mention  the work \cite{Fro}
where an exact identity (see (3.13) in \cite{Fro}) was used to study a similar problem.

  \begin{proof}
  We prove \eqref{Ebd} by induction on $p$.
When $p = 1$, \eqref{Ebd} holds with an equality.
  Now, let us assume that \eqref{Ebd} holds for some $p\in\N$
  and fix  a set $\{y_j\}_{j=1,...,2(p+1)}$ of $2(p+1)$ points in $\T^2$.
By  defining the sets $A_p^+ =  \{j=1,...,2p,~j\text{ even}\}$ and $A_p^- = \{k=1,...,2p, ~k\text{ odd}\}$, 
we set 
\begin{align*}
A_p^{\s_1 \s_2} = \big\{ (j, k) \in \{1, \dots, 2p\}:  j < k, \
j \in A_p^{\s_1}, \ k \in A_p^{\s_2}\big\} 
\end{align*}

\noi
for $\s_1, \s_2\in \{ +, -\}$.
With a slight abuse of notation, we identify $+$ and $-$ with $+1$ and $-1$ in the following.

We denote by  $\Pi_{p}$ the left-hand side of \eqref{Ebd}.  
Then, we have 
\begin{align*}
\Pi_{p+1} =   \frac{\displaystyle
  \prod_{\substack{\s_1, \s_2 \in \{+, -\}\\
\s_1\s_2 = 1}}\prod_{(j,k)\in A_{p+1}^{\s_1\s_2}} \big(|y_j - y_k| + N^{-1}\big)^{ \lambda}
  }{\displaystyle
  \prod_{\substack{\s_1, \s_2 \in \{+, -\}\\
\s_1\s_2 = -1}}\prod_{(j,k) \in A_{p+1}^{\s_1\s_2}}
  \big(|y_j - y_k| + N^{-1}\big)^{ \lambda}} .
\end{align*}

\noi
Namely, the numerator contains all the factors
with the same parity for $j$ and $k$, 
while the denominator contains all the factors
with the opposite parities for $j$ and $k$.
 Now,  
 choose $j_0 \in A_{p+1}^+ $ and $k_0 \in A_{p+1}^- $ such that
\begin{align}
\label{min}
|y_{j_0} - y_{k_0}| = \min \big\{ |y_j - y_k|; j \in A_{p+1}^+, k \in A_{p+1}^-\big\}.
\end{align}
  
  \noi
It follows from its definition given by the left-hand side of \eqref{Ebd}
(with $p$ replaced by $p+1$)
that 
 $\Pi_{p+1}$ is invariant under permutations
 of $\{1, \dots, 2p+2\}$
that do not mix even and odd integers
(i.e.~not mixing the $+$ and $-$ charges).
Thus, without loss of generality, 
 we assume that $(j_0,k_0) = (2p+1, 2p+2)$.

By  the inductive hypothesis we have
\[\Pi_p \les  \max_{\tau \in S_p} 
\prod_{1\le j\le p} \big(|y_{2j} - y_{2\tau(j)-1}| + N^{-1}\big)^{- \lambda}.
\]   

\noi
In view of \eqref{Ebd},  it suffices to prove the following bound:
\begin{align}
\label{Era}
\frac{\Pi_{p+1}}{\Pi_p} \les \big(|y_{2p+1} - y_{2p+2}| + N^{-1}\big)^{- \lambda}.
\end{align}

\noi
uniformly in $N$.
Note that the left-hand side of \eqref{Era}
contains only the factors involving $y_{2p+1}$ or $y_{2p+2}$.
Hence, we have
  \begin{align}
  \begin{split}
    \frac{\Pi_{p+1}}{\Pi_p}  
    \big( & |y_{2p+1} - y_{2p+2}| + N^{-1}\big)^{\lambda}\\
&  = \prod_{1\le j \le p} 
\bigg( \frac{|y_{2j-1} - y_{2p+1}| + N^{-1}}{|y_{2j-1} - y_{2p+2}| + N^{-1}}\bigg)^{\lambda}
 \bigg( \frac{|y_{2j} - y_{2p+2}| + N^{-1}}{|y_{2j} - y_{2p+1}| + N^{-1}}\bigg)^{\lambda} 
  \end{split}
\label{A7}
\end{align} 
  
\noi  
On the other hand, 
by the triangle inequality and \eqref{min}, we have  
\begin{align}
\label{Era2}
\max_{1\le j \le p}  \frac{|y_{2j-1} - y_{2p+1}| + N^{-1}}{|y_{2j-1} - y_{2p+2}| + N^{-1}} 
\leq 1 + \max_{1\le j \le p}  \frac{ |y_{2p+1} - y_{2p+2}|}{|y_{2j-1} - y_{2p+2}| + N^{-1}} \les 1, 
\end{align}

\noi
uniformly in $N$.
Similarly, we have
  \begin{align}
  \label{Era3}
    \max_{1 \le j \le p}  \frac{|y_{2j} - y_{2p+2}| + N^{-1}}{|y_{2j} - y_{2p+1}| + N^{-1}} \les 1,
  \end{align}
uniformly in $N$. 
Therefore, the desired estimate \eqref{Era} follows
from \eqref{A7}, 
 \eqref{Era2}, and~\eqref{Era3}.
  \end{proof}

\subsection{Estimates on the stochastic objects}

In this subsection, we present a proof of Proposition \ref{PROP:Ups}
on the imaginary complex Gaussian chaos $\U_N
= \, :\!e^{i\be\Psi_N}\!:$ defined in \eqref{Ups}. 

We first recall 
 the following lemma from \cite[Proposition 2.1]{GKO} on the regularity of the 
 truncated stochastic convolution $\Psi_N$.
See also \cite[Lemma 3.1]{GKO2}.

\begin{lemma}\label{LEM:psi}
Given any  $T,\eps>0$ and finite $p \geq 1$, 
 $\{\Psi_N\}_{N\in \N}$ is a Cauchy sequence in $L^p(\O;C([0,T];W^{-\eps,\infty}(\T^2)))$,
 converging to some limit $\Psi$ in $L^p(\O;C([0,T];W^{-\eps,\infty}(\T^2)))$.
Moreover,  $\Psi_N$  converges almost surely to the same  limit $\Psi\in C([0,T];W^{-\eps,\infty}(\T^2))$.
\end{lemma}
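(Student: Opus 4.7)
The plan is to reduce matters to second-moment computations on the Fourier side, boost to arbitrary $L^p(\O)$-moments via Gaussian hypercontractivity, pass from an $L^2_x$-type bound to the $W^{-\eps, \infty}_x$-norm via Sobolev embedding, and finally upgrade pointwise-in-$t$ convergence to convergence in $C_T W^{-\eps,\infty}_x$ via the Kolmogorov continuity criterion; almost sure convergence then follows by a standard Borel--Cantelli argument along a subsequence.

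First, from \eqref{PsiN} and the It\^o isometry applied to the cylindrical Wiener process $W$ in \eqref{W1}, one computes
\begin{align*}
\E\Big[\big|\ft\Psi_N(n,t) - \ft\Psi_M(n,t)\big|^2\Big]
\les \frac{(\chi_N - \chi_M)^2(n)}{\jb{n}^{2}}
\end{align*}
and, by interpolating the trivial bound against an explicit $\dt$-differentiation of the integrand,
\begin{align*}
\E\Big[\big|\ft\Psi_N(n,t) - \ft\Psi_N(n,s)\big|^2\Big]
\les \frac{|t-s|^\ta}{\jb{n}^{2-\ta}}
\end{align*}
for any $\ta \in (0,1)$, uniformly in $N, M$. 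Summing the first bound against $\jb{n}^{-2\eps}$ and applying dominated convergence yields $L^2_\o$-Cauchyness of $\Psi_N(t)$ in $H^{-\eps}(\T^2)$, uniformly in $t \in [0,T]$.

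Next, since each $\Psi_N(t,x)$ belongs to the first Wiener chaos, Gaussian hypercontractivity gives
\begin{align*}
\big\|\Psi_N(t,x) - \Psi_M(t,x)\big\|_{L^p_\o} \les \sqrt{p}\, \big\|\Psi_N(t,x) - \Psi_M(t,x)\big\|_{L^2_\o}
\end{align*}
for any finite $p \geq 1$, and similarly for the time increment. To trade $L^2_x$ for $L^\infty_x$ at a small cost in regularity, we use the Sobolev embedding $W^{-\eps/2, q}(\T^2) \hookrightarrow W^{-\eps,\infty}(\T^2)$, valid as soon as $q \geq 4/\eps$. Taking $p \geq q$ and exchanging $L^p_\o$ and $L^q_x$ via Minkowski's inequality reduces the problem to the pointwise-in-$x$ second-moment bound (which is in fact $x$-independent by translation invariance of the law of $\Psi_N(t)$) and hence to the Fourier-side computation above.

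Finally, the time-increment bound together with the Kolmogorov continuity criterion promotes the pointwise-in-$t$ $L^p_\o$-Cauchy property into convergence in $L^p(\O;C([0,T];W^{-\eps,\infty}(\T^2)))$, provided $p$ is chosen large enough that $\ta p > 2$. Almost sure convergence of $\Psi_N$ to the same limit $\Psi$ then follows by extracting a sufficiently rapidly convergent subsequence and invoking Borel--Cantelli. The principal technical obstacle is the passage from $L^2_x$ to $L^\infty_x$: a direct $L^2_x$-bound is immediate from the Fourier side, but upgrading to the $L^\infty_x$-norm forces us to control arbitrarily high $L^q_x$-moments, which is precisely why Gaussian hypercontractivity (rather than plain Chebyshev at the $L^2_\o$-level) is essential.
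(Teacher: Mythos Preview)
Your approach is correct and is the standard argument; note that the paper itself does not prove this lemma but simply cites it from \cite[Proposition~2.1]{GKO} (see also \cite[Lemma~3.1]{GKO2}), where the proof proceeds exactly along the lines you sketch: Fourier-side second-moment computation, the Wiener chaos estimate (hypercontractivity on the first chaos) to reach $L^p_\o$, Sobolev embedding together with Minkowski to pass to $W^{-\eps,\infty}_x$, and Kolmogorov's continuity criterion for the $C_T$-topology.

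One small point to tighten: the lemma asserts almost sure convergence of the \emph{full} sequence $\{\Psi_N\}_{N\in\N}$, not merely of a subsequence. Your ``dominated convergence'' step gives only qualitative $L^p_\o$-Cauchyness, from which Borel--Cantelli yields a.s.\ convergence along a subsequence. To recover the full sequence, either (i) note that $(\chi_N-\chi_M)^2$ is supported on $\{|n|\gtrsim N\wedge M\}$, so that the Fourier sum produces an explicit rate $(N\wedge M)^{-\dl}$ in the $L^p_\o$-bound, summable in $N$ once $p$ is taken large; or (ii) use that $\Psi_N=\P_N\Psi$ together with the deterministic estimate $\|(\P_N-1)f\|_{W^{-\eps,\infty}}\lesssim N^{-\eps/2}\|f\|_{W^{-\eps/2,\infty}}$, applied once the limit $\Psi$ is known to lie in $C_TW^{-\eps/2,\infty}_x$ almost surely (which your argument already gives, since $\eps>0$ is arbitrary).
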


It is easy to show that  the claim in Lemma \ref{LEM:psi} fails
when $\eps = 0$.
Thus, as $N\rightarrow \infty$, $\Psi_N$ becomes singular, 
necessitating a proper  renormalization procedure for $\U_N$ defined in~\eqref{Ups}.

As mentioned above, 
we will work on the physical space (rather than on the Fourier side as 
in  \cite{OTh2, GKO, GKO2}).
For this purpose, we first study the 
 covariance function for $\Psi_N$. Let 
\begin{equation}\label{def-covar}
\G_N(t,x-y) \deff \E\big[\Psi_N(t,x) \Psi_N(t,y)\big].
\end{equation}

\begin{lemma}
\label{LEM:covar}
Given $N \in \N$, let $\G_N$ be as in \eqref{def-covar}.
Then, we have 
\begin{equation}
\label{covar}
\G_N(t,x-y) =\frac1{2\pi}\sum_{n\in\Z^2} \chi_N^2(n)  \bigg\{ \frac{t}{2 \jb{n}^2} - \frac{\sin (2 t \jb{n})}{4\jb{n}^3 }\bigg\}e_n (x-y)  .
\end{equation}

\noi
In particular, we have 
\begin{align}
\label{bd-GN}
\G_N(t,x-y) \approx - \frac{t}{4\pi} \log \big(|x-y| + N^{-1}\big)
\end{align}

\noi
 for any $t \in \R$.

\end{lemma}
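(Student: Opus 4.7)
The plan is to first compute $\G_N$ explicitly via the Fourier-side representation of $\Psi_N$ and the Itô isometry for the cylindrical Wiener process $W$, thereby obtaining \eqref{covar}; then, for the asymptotic \eqref{bd-GN}, I split the resulting series into a logarithmically singular ``main'' term that can be identified with a constant multiple of $\mathbf{P}_N^2 G$ (so that Lemma~\ref{LEM:Green} applies directly) and an absolutely summable, uniformly bounded remainder.

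For the formula \eqref{covar}: the wave propagator \eqref{S} acts diagonally on the basis \eqref{exp} as $S(t-t')\mathbf{P}_N e_n = \chi_N(n)\frac{\sin((t-t')\jb{n})}{\jb{n}} e_n$, so \eqref{PsiN}, \eqref{W1}, and the covariance relations $\E[B_n(t')\overline{B_m(t')}] = \delta_{nm} t'$ (together with the reality constraint $B_{-n} = \overline{B_n}$) combined with the Itô isometry yield
\[
\G_N(t,x-y) = \sum_{n\in\Z^2} \chi_N^2(n)\, e_n(x)\overline{e_n(y)} \int_0^t \frac{\sin^2((t-t')\jb{n})}{\jb{n}^2}\, dt'.
\]
The relation $e_n(x)\overline{e_n(y)} = \frac{1}{2\pi} e_n(x-y)$ coming from \eqref{exp}, together with the identity $\sin^2\theta = \frac{1-\cos 2\theta}{2}$ evaluated against $dt'$, produces the time integral $\frac{t}{2\jb{n}^2} - \frac{\sin(2t\jb{n})}{4\jb{n}^3}$, which is exactly \eqref{covar}.

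For the asymptotic \eqref{bd-GN}, I decompose $\G_N(t,x-y) = \frac{t}{2}\mathbf{P}_N^2 G(x-y) + R_N(t,x-y)$, where $G$ is the Green's function from \eqref{G1a} (accounting for the $\frac{t}{2\jb{n}^2}$ part of \eqref{covar}) and
\[
R_N(t,x-y) = -\frac{1}{8\pi}\sum_{n\in\Z^2} \chi_N^2(n) \frac{\sin(2t\jb{n})}{\jb{n}^3} e_n(x-y).
\]
Lemma~\ref{LEM:Green} gives $\frac{t}{2}\mathbf{P}_N^2 G(x-y) \approx -\frac{t}{4\pi}\log(|x-y|+N^{-1})$, with the constants in the relation $\approx$ (from \eqref{approx}) simply rescaled by the factor $t/2$. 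For the remainder, the trivial bounds $|\sin(2t\jb{n})|\leq 1$ and $|e_n(x-y)| = \frac{1}{2\pi}$ yield $|R_N(t,x-y)| \leq C \sum_n \jb{n}^{-3} < \infty$ uniformly in $N \in \N$ and $x-y \in \T^2\setminus\{0\}$. Combining the two contributions gives \eqref{bd-GN}.

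The argument is essentially a direct calculation; the only points requiring mild care are the bookkeeping of the normalization conventions in \eqref{exp} and of the complex-valued Brownian motions (in particular the constraint $B_{-n} = \overline{B_n}$), and the observation that the $\jb{n}^{-3}$ weight in the oscillatory part of \eqref{covar} decays fast enough that no further analysis is needed for $R_N$. All of the genuinely geometric input is already packaged in Lemma~\ref{LEM:Green}.
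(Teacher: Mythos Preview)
Your proof is correct and follows essentially the same approach as the paper: compute the covariance directly from \eqref{PsiN}--\eqref{W1}, then observe that $\G_N(t,\cdot) \approx \tfrac{t}{2}\mathbf{P}_N^2 G$ (the oscillatory $\jb{n}^{-3}$ remainder being uniformly bounded) and invoke Lemma~\ref{LEM:Green}. The paper is terser, simply noting $\G_N(t,x) \approx \tfrac{t}{2}\mathbf{P}_N^2 G(x)$ without spelling out the remainder estimate, but the content is the same.
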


\begin{proof}
The identity \eqref{covar}
follows from a straightforward computation,
using \eqref{PsiN} and~\eqref{W1}
with \eqref{exp}.
See~(1.8) and~(2.8) in~\cite{GKO} for analogous computations.

From \eqref{G3} and \eqref{covar}, we have
\[ \G_N (t, x) \approx \frac{t}{2} \P_N^2 G(x).\]

\noi
Then, \eqref{bd-GN} follows from 
Lemma~\ref{LEM:Green}.
\end{proof}

By setting 
\begin{equation*}
\G(t,x-y) \deff \E\big[\Psi(t,x) \Psi(t,y)\big], 
\end{equation*}

\noi
we then formally have
\begin{equation}
\label{G4}
\G(t,x-y) =\frac1{2\pi}\sum_{n\in\Z^2}   \bigg\{ \frac{t}{2 \jb{n}^2} - \frac{\sin (2 t \jb{n})}{4\jb{n}^3 }\bigg\}e_n (x-y)  ,
\end{equation}

\noi
where the sum is to be interpreted in the limiting sense as in \eqref{Bessel}.
By comparing \eqref{G1a} and \eqref{G4}, 
we have $\G(t, x) \approx \frac t2 G(x)$.
Similarly, we have
\begin{align}
\P_{N_1} \P_{N_2} \G (t, x)
 \approx \frac{t}{2} \P_{N_1}\P_{N_2} G(x).
\label{G5}
\end{align}

\noi
In particular, from \eqref{A6b} in Remark \ref{REM:G1}, 
we obtain
\begin{align}
\P_{N_1}\P_{N_2} \G(t, x) \approx -\frac{t}{4\pi} \log\big(|x|+N_1^{-1}\big)
\label{G6}
\end{align}

\noi
for $N_2 \geq N_1 \geq 1$.

\smallskip

We are now ready to present a proof of  Proposition~\ref{PROP:Ups}.

\begin{proof}[Proof of Proposition~\ref{PROP:Ups}.]
Fix $\be \in\R$ and $T > 0$ such that $\be^2 T < 8 \pi$.
Also let $p \in \N$, finite $q \geq 1$, 
and $\al >  \frac{\be^2 T}{8\pi} $.
Without loss of generality, we assume $\al < 2$ in the following.

Fix small  $\dl > 0$.
Then, by Sobolev's inequality (in $x$) followed by 
Minkowski's integral inequality, we have
\begin{align}
\|\U_N\|_{L^{2p}_\o L^q_TW^{-\al,\infty}_x} \les \|\jb{\nabla}^{\dl-\al}\U_N\|_{L^{2p}_\o L^q_TL^{r_{\dl}}_x}\les \Big\|\|\jb{\nabla}^{\dl-\al}\U_N(t,x)\|_{L^{2p}_\o}\Big\|_{L^q_TL^{r_{\dl}}_x}
\label{E0a}
\end{align}

\noi
for some large but finite $r_{\dl}$,
provided that 
 $2p\geq \max(q,r_{\dl})$.

Fix $t\in[0,T]$ and $x\in\T^2$. 
Recalling $\jb{\nb}^{\dl - \al} f = J_{\al-\dl} * f$, 
where $J_\al$ is as in \eqref{Bessel}, 
it follows from \eqref{Ups} that
 \begin{align}
 \begin{split}
\E & \Big[\big| \jb{\nabla}^{\dl-\al}\U_N(t,x)\big|^{2p}\Big] 
= e^{p\be^2\s_N(t)}\E \Bigg[ \bigg| \int_{\T^2} 
J_{\al-\dl} (x-y) e^{i \be \Psi_N (t,y)} dy \bigg|^{2p} \Bigg]\\
    & =   e^{p\be^2\s_N(t)}\int_{(\T^{2})^{2p}} 
\E \bigg[  e^{i \be \sum_{j =1}^p  (\Psi_N (t,y_{2j}) - \Psi_N (t,y_{2j-1}))} \bigg] 
 \prod_{k=1}^{2p}  J_{\al-\dl} (x-y_k) d\vec y, 
\end{split}
\label{E0}
\end{align}
\noi
where $d\vec y \deff   dy_1\cdots d y_{2p}   $.
Noting that $ \sum_{j =1}^p  (\Psi_N (t,y_{2j}) - \Psi_N (t,y_{2j-1})) $ is a mean-zero Gaussian random variable, 
the explicit formula for the characteristic function of a Gaussian random variable yields
\begin{align}
\begin{split}
\E \bigg[   e^{i \be \sum_{j =1}^p  (\Psi_N (t,y_{2j}) - \Psi_N (t,y_{2j-1}))} \bigg]
     & = 
e^{- \frac{\be^2}2 \E \big[| \sum_{j =1}^p  (\Psi_N (t,y_{2j}) - \Psi_N (t,y_{2j-1})) |^2\big]}  .
\end{split}
\label{E1}
     \end{align}

 Let  $\{\eps_j\}_{j=1,...,2p}$ be as in Lemma~\ref{LEM:prod}.
Then,  we can rewrite the expectation  in the exponent on the right-hand side 
of \eqref{E1} as 
  \[\E\Big[\big|\sum_{j=1}^{2p}\eps_j\Psi_N(t,y_j)\big|^2\Big] 
  = \sum_{j,k=1}^{2p}\eps_j\eps_k \G_N(t,y_j-y_k), \]

\noi
where  $\G_N$ is the covariance function defined in \eqref{def-covar}. 
From the  definition \eqref{sig}, we have $\G_N(t,0)=\s_N(t)$.
Hence, we obtain
  \begin{equation} 
 e^{- \frac{\be^2}2 \E \big[| \sum_{j =1}^{p}  (\Psi_N (t,y_{2j}) - \Psi_N (y_{2j-1},t))|^2\big]} = e^{- p \be^2 \sigma_N(t)} e^{ - \be^2 \sum_{1\le j < k\le 2p} \eps_j\eps_k \G_N(t,y_j-y_k)  } .
\label{E2}
\end{equation}

 \noi
 Then, from \eqref{E1}, \eqref{E2}, 
the two-sided bound \eqref{bd-GN} in Lemma~\ref{LEM:covar}, 
 and  Lemma~\ref{LEM:prod}, we obtain
 \begin{equation}\label{E3}
 \begin{split}
 e^{p\be^2\s_N(t)} \E \bigg[  & e^{i \be \sum_{j =1}^p  (\Psi_N (t,y_{2j}) - \Psi_N (t,y_{2j-1}))} \bigg]\\
  & \sim  \prod_{1\le j < k \le 2p } \big(|y_j - y_k| + N^{-1}\big)^{\eps_j\eps_k \frac{\be^2 t}{4 \pi}}\\
 &\les \max_{\tau \in S_p} \prod_{1\leq j \leq p} \big(|y_{2j} - y_{2\tau(j)-1}| + N^{-1}\big)^{ -\frac{\be^2 t}{4 \pi}}\\
&\le \sum_{\tau \in S_p} \prod_{1\leq j \leq p} \big(|y_{2j} - y_{2\tau(j)-1}| + N^{-1}\big)^{ -\frac{\be^2 t}{4 \pi}}.
 \end{split}
 \end{equation}

 \noi
Finally,  from \eqref{E0} and \eqref{E3}
we obtain
 \begin{align}
\begin{split}
\E & \Big[\big| \jb{\nabla}^{\dl-\al}\U_N(t,x)\big|^{2p}\Big] 
 \\
 & \les \sum_{\tau \in S_p}\int_{(\T^{2})^{2p}}   
  \prod_{1\leq j \leq p} \big(|y_{2j} - y_{2\tau(j)-1}| + N^{-1}\big)^{ -\frac{\be^2 t}{4 \pi}} 
  \prod_{k=1}^{2p}  |J_{\al-\dl} (x-y_k)| d\vec y.
\end{split}
\label{E3a}
 \end{align}

 In the following, we fix $\tau \in S_p$.
 Then,  it suffices to bound each pair of integrals:
 \[\int_{\T^2}\int_{\T^2}\big(|y_{j} - y_{k}| + N^{-1}\big)^{ -\frac{\be^2 t}{4 \pi}} 
 | J_{\al-\dl} (x-y_{j})| |J_{\al-\dl} (x-y_{k})| dy_{j}dy_{k},\]
 
 \noi
 for  an even integer $j=2,...,2p$ and $k=2\tau(\tfrac{j}2)-1$.
  From Lemma~\ref{LEM:Bessel} with $0 < \al - \dl < 2$, 
  we can bound this integral by 
 \begin{align}
 \begin{split}
 \int_{\T^2} & \int_{\T^2}\big(|y_{j} - y_{k}|+N^{-1}\big)^{ -\frac{\be^2 t}{4 \pi}} |x-y_{j}|^{\al-\dl-2}
  |x-y_{k}|^{\al-\dl-2} dy_{j}dy_{k}\\
&  \leq 
 \int_{\T^2}\int_{\T^2}|y_{j} - y_{k}|^{ -\frac{\be^2 t}{4 \pi}} |x-y_j|^{\al-\dl-2}|x-y_k|^{\al-\dl-2} dy_{j}dy_k, 
\end{split}
\label{E4}
  \end{align}
 uniformly in $N\in\N$.

\smallskip
\noi
$\bullet$
{\bf Case 1:} 
 $|y_j-y_k|\sim |x-y_k|\gtrsim |x-y_j|$. 
\quad 
In this case, we bound the  integral in \eqref{E4} by 
\begin{align*}
\text{RHS of \eqref{E4}}
& \les \int_{\T^2} |x-y_k|^{\al-\dl-2-\frac{\be^2t}{4\pi}} 
\int_{|x-y_j|\les |x-y_k|}
|x-  y_j|^{\al-\dl-2}
dy_jdy_k\\ &\les \int_{\T^2}|x-y_k|^{2(\al-\dl) - 2 -\frac{\be^2t}{4\pi}}dy_k
\les 1,
\end{align*}

\noi
where we used $\frac{\beta^2t}{8\pi}<\al-\dl$.
The symmetry allows us to handle the case
$|y_j-y_k|\sim |x-y_j|\gtrsim |x-y_k|$.

 \smallskip
\noi
$\bullet$
{\bf Case 2:} 
$|x-y_j|\sim |x-y_k|\gtrsim |y_j-y_k|$.
\quad In this case,  we bound the  integral in \eqref{E4} by 
\begin{align*}
\text{RHS of \eqref{E4}}
& \les \int_{\T^2}|x-  y_k  |^{2(\al-\dl)-4} \int_{|y_j-y_k|\les |x-y_k|}|y_j-y_k|^{-\frac{\be^2t}{4\pi}}dy_jdy_k\\
& \les \int_{\T^2}|x-y_k|^{2(\al-\dl)-2-\frac{\beta^2t}{4\pi}}dy_k \les 1
\end{align*}

\noi
since $\frac{\beta^2t}{8\pi} < \min(\al - \dl, 1)$.

\smallskip

Putting together \eqref{E3a} and the estimates on \eqref{E4}, 
we obtain
\begin{equation}\label{E5}
\E \big[\big| \jb{\nabla}^{\dl-\al}\U_N(t,x)\big|^{2p}\big]\les 1,
\end{equation}

\noi
uniformly in $t\in[0,T]$, $x\in\T^2$, and $N\in\N$.
Therefore, we conclude from  \eqref{E0a} and \eqref{E5} that 
\[\|\U_N\|_{L^{2p}_\o L^q_TW^{-\al,\infty}_x}\les T^{\frac1q},\]
uniformly in $N\in\N$.

\smallskip

Next, we establish  convergence of $\U_N$.
Let $N_2 \geq N_1 \geq1$.
By 
repeating the  computation as above when $p=1$ with $\U_{N_1}-\U_{N_2}$ in place of $\U_N$, 
we have 
\begin{align}
\begin{split}
&\E \Big[ \big| \jb{\nabla}^{\dl-\al}\big(\U_{N_1}(t,x)-\U_{N_2}(t,x)\big) \big|^{2} \Big]
\\ &\quad =  \int_{\T^2}\int_{\T^2} J_{\al-\dl} (x-y)J_{\al-\dl} (x-z) \\
&\quad\quad \times 
\E\bigg[\Big(e^{ \frac{\be^2}2 \sigma_{N_1}(t)}e^{i \be \Psi_{N_1} (t,y)}-e^{ \frac{\be^2}2 \sigma_{N_2}(t)}e^{i \be \Psi_{N_2} (t,y)}\Big)\\
&\quad\quad\times \Big(e^{ \frac{\be^2}2 \sigma_{N_1}(t)}e^{-i \be \Psi_{N_1} (t,z)}-e^{ \frac{\be^2}2 \sigma_{N_2}(t)}e^{-i \be \Psi_{N_2} (t,z)}\Big)\bigg] dydz \\
&\quad = \sum_{j = 1}^2 
\int_{\T^2}\int_{\T^2} J_{\al-\dl} (x-y)J_{\al-\dl} (x-z) 
\Big(e^{ \be^2 \G_{N_j}(t,y-z) }-e^{ \be^2 \P_{N_1}\P_{N_2}\G(t,y-z)}\Big)dydz,
\end{split}
\label{H1}
\end{align}

\noi
where we used 
$\P_{N_1}\P_{N_2}\G(t,y-z) = 
\E\big[\Psi_{N_1}(t,y) \Psi_{N_2} (t,z)\big]$.
By  the mean value theorem
and  
the bounds \eqref{bd-GN} in Lemma \ref{LEM:covar}
and \eqref{G6}, we have 
\begin{align}
\begin{split}
& \Big|e^{ \be^2 \G_{N_j}(t,x) }-e^{ \be^2 \P_{N_1}\P_{N_2} \G(t,x)}\Big|\\
&\quad = \bigg|\big(\P_{N_j}^2\G(t,x) - \P_{N_1}\P_{N_2}\G(t,x)\big)\\
& \quad \quad \times \int_0^1\be^2 \exp\big[\be^2\big(\tau \P_{N_j}^2\G(t,x) +(1-\tau)\P_{N_1} \P_{N_2}\G(t,x)\big)\big]d\tau\bigg|\\
&\quad \les \big(|x|+N_2^{-1}\big)^{-\frac{\be^2 t}{4\pi}}
\big|\P_{N_j}^2\G(t,x) - \P_{N_1}\P_{N_2}\G(t,x)\big|.
\end{split}
\label{H2}
\end{align}

Given $\eps > 0$, there exists $C_\eps > 0$ such that 
\begin{align}
| \log y | \le C_\eps  y^{-\eps}
\label{H3}
\end{align}

\noi
 for any $0 < y \les 1$.
From \eqref{A6a} and \eqref{G5} along with \eqref{H3}, 
we have
\begin{align}
\begin{split}
\big|\P_{N_j}^2\G(t, x)-\P_{N_1} \P_{N_2}\G(t, x)\big| 
& \les T \big(|x|+N_2^{-1}\big)^{-\eps}\wedge\big(N_1^{-1}|x|^{-1}\big)\\
& \les T N_1^{-\eps} |x|^{-2\eps}
\end{split}
\label{H4}
\end{align}

\noi
for any $t \in[0,  T]$ and non-zero $x\in \T^2 \cong [-\pi,\pi)^2$.
Hence, from \eqref{H1}, \eqref{H2}, and \eqref{H4}
along with Lemma \ref{LEM:Bessel} (with $0 < \al - \dl < 2$), we obtain
\begin{align*}
&\E \Big[ \big| \jb{\nabla}^{\dl-\al}\big(\U_{N_1}(t,x)-\U_{N_2}(t,x)\big) \big|^{2} \Big]
\\ &\quad \les T \int_{\T^2}\int_{\T^2} |x-y|^{\al-\dl-2}|x-z|^{\al-\dl-2}N_1^{-\eps}|y-z|^{-2\eps-\frac{\be^2t}{4\pi}}dydz.
\end{align*}

\noi
By taking $\eps > 0$ sufficiently small such that 
 $\frac{\beta^2t}{8\pi} +\eps < \min(\al - \dl, 1)$, 
 we can proceed as in Cases 1 and 2 above and 
 estimate the integrals above by 
\begin{align}
\E \Big[ \big| \jb{\nabla}^{\dl-\al}\big(\U_{N_1}(t,x)-\U_{N_2}(t,x)\big) \big|^{2}\Big] \les T N_1^{-\eps}
\label{H5}
\end{align}

\noi
for any $N_2\geq N_1 \geq 1$,  $t\in [0,T]$, and $x\in\T^2$.

Given $p \geq 1$, we can interpolate the previous bound \eqref{H5} with \eqref{E5} 
and obtain 
\[\E \Big[ \big| \jb{\nabla}^{\dl-\al}\big(\U_{N_1}(t,x)-\U_{N_2}(t,x)\big) \big|^{p}\Big] \le C(T) N_1^{-\eps}\]

\noi
for any $N_2\geq N_1 \geq 1$,  $t\in [0,T]$, and $x\in\T^2$.
Therefore, we conclude that 
\[\|\U_{N_1}-\U_{N_2}\|_{L^{p}_\o L^q_TW^{-\al,\infty}_x} \le C(T)N_1^{-\tfrac{\eps}{p}}.\]

\noi
This shows that 
$\U_N$ is a Cauchy sequence in $L^p(\O;L^q([0,T];W^{-\al,\infty}(\T^2)))$. 
This completes the proof of  Proposition~\ref{PROP:Ups}.
\end{proof}

 \section{Proofs of the main results}

In this  section, we 
present a proof of local well-posedness 
of the renormalized stochastic sine-Gordon equation 
(Theorem~\ref{THM:main}).
Given 
the regularity and convergence properties
for the 
 imaginary  Gaussian multiplicative chaos $\U_N$
(Proposition~\ref{PROP:Ups}), 
Theorem~\ref{THM:main} follows from a standard application of the Strichartz estimate (see \eqref{Stri} below)
as in the work \cite{GKO} on the stochastic nonlinear wave equation
with a polynomial nonlinearity.
For this purpose, we first go over the 
Strichartz estimate along with other useful lemmas
 in Subsection~\ref{SUBSEC:est}.
 We then present a proof of 
Theorem~\ref{THM:main}
 in Subsection~\ref{SUBSEC:main1}.
We conclude this paper by 
establishing a triviality result (Proposition~\ref{PROP:triviality})
for the (unrenormalized) stochastic sine-Gordon equation \eqref{SSG}
in Subsection \ref{SUBSEC:main2}.

\subsection{Strichartz estimates
and other useful lemmas}
\label{SUBSEC:est}

We begin by recalling the Strichartz estimate for the linear wave equation on $\T^2$. 
Given 
 $0<s<1$, we say that a pair $(q, r)$ of exponents (and a pair $(\wt q,\wt r)$, respectively)
 is  $s$-admissible (and dual $s$-admissible, respectively), 
  if $1\leq \wt q\leq 2\leq q\leq\infty$ and $1<\wt r\leq 2 \leq r<\infty$ and if they satisfy the 
  following scaling and admissibility conditions:
\[\frac1q + \frac2r = 1-s = \frac1{\wt q}+\frac2{\wt r} -2, 
\qquad  
\frac2q+\frac1r\leq\frac12,
\qquad \text{and}\qquad  \frac2{\wt q}+\frac1{\wt r}\geq \frac52.\]

\noi
Then, it follows from 
Lemma 3.3 in \cite{GKO}, which  studies the maximization problem
 for  $J_s = \min(\frac{r}{\wt r}, \frac{q}{\wt q})$, $0 < s < 1$, that 
given $0 < s < 1$, 
there exist an $s$-admissible pair $(q,r) = (q(s), r(s))$ and a dual $s$-admissible pair 
$(\wt q,\wt r) = (\wt q(s),\wt r(s))$ such that 
\begin{equation}\label{bd-pairs}
q>2\wt q \qquad \text{and}\qquad r>2\wt r  .
\end{equation}

Let  $0 < s < 1$.
In the remaining part of this paper, 
we fix  pairs $(q,r)$ and $(\wt q,\wt r)$ as above, satisfying \eqref{bd-pairs}.
Given 
 $0<T\leq 1$, we then define the Strichartz space:
\begin{equation}\label{X}
X^s(T)=C([0,T];H^s(\T^2))\cap C^1([0,T];H^{s-1}(\T^2))\cap L^q([0,T];L^r(\T^2))
\end{equation}
and its ``dual'' space:
\[N^s(T) = L^1([0,T];H^{s-1}(\T^2)) + L^{\wt q}([0,T];L^{\wt r}(\T^2)),\]

\noi
where 
$X^s(T)$- and $N^s(T)$-norms are given by 
\[ \| u \|_{X^s(T)}
= 
\|u\|_{C_TH^s_x} + \|\dt u \|_{C_TH^{s-1}_x}
+ \|u\|_{L^q_TL^r_x} \]

\noi
and
\[ \|u \|_{N^s(T)}
= \inf \Big\{
\|u_1\|_{L^1_TH^{s-1}_x} + \|u_2\|_{L^{\wt q}_TL^{\wt r}_x}:
u = u_1 + u_2
\Big\}.\]

\noi
Then, 
the solution $u$ to the following linear wave equation:
\[\begin{cases}
\dt^2u+(1-\Dl)u = f\\
(u,\dt u)\big|_{t=0}=(u_0,u_1)
\end{cases}\] 

\noi
on $[0,T]$ satisfies the following Strichartz estimate:
\begin{equation}\label{Stri}
\|u\|_{X^s(T)} \les \|(u_0,u_1)\|_{\H^s} + \|f\|_{N^s(T)}, 
\end{equation}

\noi
uniformly in $T \in [0, 1]$.
The Strichartz estimates on $\R^d$ have been studied extensively by many
mathematicians.  See \cite{GV, LS, KeelTao}
in the context of the wave equation.
For the Klein-Gordon equation under consideration, 
see \cite{KSV}.
Thanks to the finite speed of propagation, 
the estimate \eqref{Stri} on $\T^2$ follows from the corresponding
estimate on $\R^2$.

Next, we recall the following product estimates.  See Lemma 3.4 in \cite{GKO}.
\begin{lemma}\label{LEM:bilin}
 Let $0\le s \le 1$.

\smallskip

\noi
\textup{(i)} Suppose that 
 $1<p_j,q_j,r < \infty$, $\frac1{p_j} + \frac1{q_j}= \frac1r$, $j = 1, 2$. 
 Then, we have  
\begin{equation}  
\| \jb{\nb}^s (fg) \|_{L^r(\T^d)} 
\les \Big( \| f \|_{L^{p_1}(\T^d)} 
\| \jb{\nb}^s g \|_{L^{q_1}(\T^d)} + \| \jb{\nb}^s f \|_{L^{p_2}(\T^d)} 
\|  g \|_{L^{q_2}(\T^d)}\Big).
\label{bilinear+}
\end{equation}

\smallskip

\noi
\textup{(ii)} 
Suppose that  
 $1<p,q,r < \infty$ satisfy the scaling condition:
$\frac1p+\frac1q=\frac1r + \frac{s}d $.
Then, we have
\begin{align}
\big\| \jb{\nb}^{-s} (fg) \big\|_{L^r(\T^d)} \les \big\| \jb{\nb}^{-s} f \big\|_{L^p(\T^d) } 
\big\| \jb{\nb}^s g \big\|_{L^q(\T^d)}.  
\label{bilinear-}
\end{align}

\end{lemma}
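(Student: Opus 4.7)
Both inequalities in Lemma~\ref{LEM:bilin} are classical fractional Leibniz-type estimates on the torus, and my plan is to establish them using Bony's paraproduct calculus. With $\{P_j\}_{j \geq -1}$ a Littlewood-Paley partition of unity on $\T^d$ and $S_j = \sum_{k \leq j-1} P_k$ the associated low-frequency cut-offs, I would decompose
\[
fg = T_f g + T_g f + R(f,g), \qquad T_f g \deff \sum_j S_{j-3} f \cdot P_j g,
\]
with $T_g f$ symmetric and the resonant piece given by $R(f,g) = \sum_{|j-k|\leq 2} P_j f \cdot P_k g$. The key fact used throughout is that each summand of $T_f g$ and $T_g f$ is Fourier-localized in the annulus $\{|\xi| \sim 2^j\}$, whereas each summand of $R(f,g)$ is only localized in the ball $\{|\xi| \lesssim 2^j\}$, so that the Littlewood-Paley square function characterization of $L^r$ can be used directly on the two paraproducts but not on $R$.

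For part (i), I would apply $\jb{\nb}^s$ to each piece. Since $\jb{\nb}^s$ acts essentially as multiplication by $2^{sj}$ on an annularly-localized summand, one has $\jb{\nb}^s T_f g \sim T_f(\jb{\nb}^s g)$ modulo harmless commutator errors. The $L^r$-norm of this expression is then controlled by combining the square function characterization of $L^r$, the Fefferman-Stein vector-valued maximal inequality to handle $\sup_j |S_{j-3}f|$ in $L^{p_1}$, and H\"older's inequality with $\tfrac{1}{p_1}+\tfrac{1}{q_1}=\tfrac{1}{r}$, producing the first term on the right of \eqref{bilinear+}. A symmetric treatment of $T_g f$ produces the second term. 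For part (ii), the scaling relation $\tfrac{1}{p}+\tfrac{1}{q}=\tfrac{1}{r}+\tfrac{s}{d}$ is exactly the Sobolev scaling, so a Sobolev embedding must enter. Writing $\jb{\nb}^{-s} T_g f \sim T_g(\jb{\nb}^{-s} f)$ via the annular-localization heuristic and applying the same paraproduct estimate gives $\|g\|_{L^{q^*}} \|\jb{\nb}^{-s} f\|_{L^p}$ with $\tfrac{1}{q^*} = \tfrac{1}{q} - \tfrac{s}{d}$; the Sobolev embedding $W^{s,q}(\T^d) \hookrightarrow L^{q^*}(\T^d)$ then converts $\|g\|_{L^{q^*}}$ into $\|\jb{\nb}^s g\|_{L^q}$, and the scaling hypothesis is exactly what ensures $\tfrac{1}{q^*}+\tfrac{1}{p}=\tfrac{1}{r}$. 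The piece $T_f g$ in (ii) is then handled either by duality from part (i) or directly with the help of Bernstein's inequality to trade a derivative on the low-frequency factor for a $2^{sj}$ gain.

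The main obstacle in both parts is the resonant piece $R(f,g)$: because its summands are supported only in balls rather than annuli, the square function shortcut used for $T_f g$ and $T_g f$ is not directly available, and one must insert an additional outer Littlewood-Paley projector $P_k$ and sum geometrically in $j \gtrsim k$ after distributing the derivative onto one of the factors. All of these ingredients are by now classical and the argument is carried out in detail in \cite{GKO}, so the only remaining checks are that the open-range hypotheses on $p_j, q_j, p, q, r$ stay safely within the validity ranges of the Sobolev embeddings, Bernstein inequalities, and Mikhlin multiplier theorem invoked along the way.
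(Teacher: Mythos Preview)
Your proposal is correct and in line with the paper's treatment: the paper does not supply its own proof of Lemma~\ref{LEM:bilin} but simply records the two estimates and refers the reader to \cite[Lemma~3.4]{GKO}. The Bony paraproduct argument you outline is precisely the standard route to these fractional Leibniz inequalities, and your closing caveat about checking the exponent ranges (in particular for the Sobolev embedding step in part~(ii) when $sq$ approaches $d$) is the only place where care is needed.
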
 

Lastly, we recall the fractional chain rule from \cite{Gatto}. 
\begin{lemma}
\label{LEM:FC}
Suppose that $F$ is a Lipschitz function with Lipschitz constant $L$.
Then, given  any $0 < s < 1$ and $1 < p < \infty$,  we have
\begin{equation}\label{FC}
\big\| |\nabla|^s F(u)\big\|_{L^p(\T^d)} \les L \big\||\nabla|^s u\big\|_{L^{p}(\T^d)}
\end{equation}

\noi
 for any $u\in C^{\infty}(\T^d)$.
\end{lemma}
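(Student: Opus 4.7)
The plan is to exploit a pointwise, difference-based characterization of the homogeneous $\dot W^{s,p}$-seminorm which interacts directly with the Lipschitz bound $|F(u(x))-F(u(y))|\leq L|u(x)-u(y)|$. The point is that Lipschitz-ness is a pointwise statement about first differences, so it immediately transfers to any sublinear functional built from such differences; the entire task is therefore to produce an $L^p$-equivalent norm of this type.

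First, I would recall (or reprove on $\T^d$) the Strichartz--Stein characterization: for $0<s<1$ and $1<p<\infty$,
\[
\big\||\nabla|^s f\big\|_{L^p(\T^d)} \sim \big\| \mathcal{D}_s f\big\|_{L^p(\T^d)},
\]
where $\mathcal{D}_s$ is the sublinear square function
\[
\mathcal{D}_s f(x) = \bigg(\int_0^1 \Big(t^{-s}\sup_{|h|\leq t}|f(x+h)-f(x)|\Big)^2\,\frac{dt}{t}\bigg)^{1/2}.
\]
On $\T^d$, this equivalence follows by a dyadic Littlewood--Paley decomposition combined with vector-valued Calder\'on--Zygmund bounds applied to the heat semigroup $e^{t\Dl}$: one compares raw differences $f(x+h)-f(x)$ with $(e^{|h|^2\Dl}-I)f(x)$, the former being dominated by a maximal function of the latter via standard approximation-of-the-identity arguments, while the reverse inequality uses the off-diagonal decay of the heat kernel. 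Each direction splits naturally into a high-frequency part handled by the semigroup (where the kernel gradient has good decay) and a low-frequency part handled by the trivial pointwise bound and the compactness of $\T^d$; the truncation at $t=1$ is permissible because $|\nabla|^s$ kills constants and $\T^d$ has finite diameter.

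Once this characterization is in hand, the proof concludes in one step. Since $F$ is $L$-Lipschitz,
\[
|F(u)(x+h)-F(u)(x)|\leq L\,|u(x+h)-u(x)|
\]
for all $x,h\in\T^d$, which, substituted into the definition of $\mathcal{D}_s$, gives the pointwise estimate $\mathcal{D}_s(F(u))(x)\leq L\,\mathcal{D}_s u(x)$. Taking $L^p(\T^d)$-norms of both sides and invoking the equivalence twice yields \eqref{FC}. By a standard mollification argument one may first assume $F\in C^\infty$ and then pass to the limit, using that the Lipschitz constant $L$ is stable under regularization.

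The main obstacle is the square-function characterization: the chain rule itself is merely a pointwise monotonicity statement that is immediate from Lipschitz-ness, so all the analytic work is concentrated in obtaining an intrinsic difference-based norm equivalent to $\||\nabla|^s\cdot\|_{L^p}$. This characterization is classical on $\R^d$ via Stein and Strichartz, and transfers to $\T^d$ by standard periodization/localization; in the paper this entire piece of machinery is sidestepped by citing Gatto's result directly.
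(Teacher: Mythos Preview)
Your approach is correct, and in fact you provide substantially more than the paper does: the paper gives no proof at all, simply citing \cite{CW, Kato, Staffilani, Taylor} for the $\R^d$ case and \cite{Gatto} for the $\T^d$ statement. Your sketch via the Strichartz--Stein difference square function $\mathcal{D}_s$ is precisely the mechanism underlying Gatto's result (which is stated in the generality of spaces of homogeneous type and rests on exactly such an intrinsic first-difference characterization of $\dot W^{s,p}$), so you are essentially unpacking the cited reference rather than taking a different route. The one-line pointwise monotonicity $\mathcal{D}_s(F(u))\leq L\,\mathcal{D}_s u$ is indeed where the Lipschitz hypothesis enters, and the remaining work---the $L^p$-equivalence $\||\nabla|^s f\|_{L^p}\sim\|\mathcal{D}_s f\|_{L^p}$---is, as you note, the only nontrivial ingredient; your outline of its proof on $\T^d$ via heat-semigroup comparison and Littlewood--Paley theory is standard and sound.
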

\noi

The fractional chain rule on $\R^d$ was essentially proved in \cite{CW}.\footnote{As pointed out in \cite{Staffilani}, 
the proof in \cite{CW} needs
a small correction, which  yields the fractional chain  rule in a 
less general context.
See \cite{Kato, Staffilani, Taylor}.}
As for the estimate \eqref{FC} on $\T^d$, see~\cite{Gatto}.

\subsection{Local well-posedness}
\label{SUBSEC:main1}

In this subsection, we present a proof of Theorem \ref{THM:main}.
For simplicity, we assume that $0 < s < 1$, 
in which case we have $\s = s$
in the statement of Theorem \ref{THM:main}.
Fix  $\be \in \R\setminus \{0\}$
and  $(u_0,u_1)\in\H^s(\T^2)$.
The main idea is to apply the Da Prato-Debussche trick.
Namely, 
we first write the solution $u_N$ to \eqref{RSSGN}
as $u_N=\Psi_N + v_N$, where $\Psi_N$ is
the truncated stochastic convolution defined in \eqref{PsiN}.
Then, the residual term $v_N$
satisfies \eqref{vN}. 
By writing  \eqref{vN} in the Duhamel formulation, we have
\begin{equation}\label{Duhamel}
v_N(t) = \dt S(t)u_0 + S(t)u_1 -\int_0^tS(t-t')\Im\big(\U_Ne^{i\be v_N}\big)(t')dt',
\end{equation}

\noi
where $S(t)$ and  $\U_N$ are as in \eqref{S} and \eqref{Ups}, respectively.
In the following, we use $B_R$ to denote the ball in $X^s(T)$ of radius $R > 0$
centered at the origin.

Let $\Phi^N(v_N) = \Phi^N_{(u_0, u_1), \U_N}(v_N)$ denote the right-hand side of \eqref{Duhamel}.
Let $0 < T \leq 1$.
By  the Strichartz estimate \eqref{Stri}, we have
\begin{align}
\big\| \Phi^N(v_N) \big\|_ {X^s(T)} 
\les 
\|(u_0, u_1) \|_{\H^s}  +  \Big\| \Im\big( \U_N e^{i\be v_N}\big)  \Big\|_{N^s(T)}.
\label{Z1}
\end{align}

\noi
From (3.22) and (3.23) in \cite{GKO}, we have 
\begin{equation}\label{interpolation}
\|f\|_{L^{q_1}_TW^{\al,r_1}_x}\les \|f\|_{X^s(T)} \qquad \text{ and }\qquad \|f\|_{N^s(T)} \les \|f\|_{L^{\wt q_1}_TW^{-\al,\wt r_1}_x},
\end{equation}

\noi
where $0<\al<\min(s,1-s)$ and the pairs $(q_1,r_1)$ and $(\wt q_1,\wt r_1)$ are given by
\begin{align}
 \frac1{ q_1} = \frac{1-\al/s}{q} + \frac{\al/s}{\infty}, \qquad  \frac1{r_1} = \frac{1-\al/s}{r} + \frac{\al/s}{2}   
 \label{Z1a}
\end{align}
and 
\begin{align}
 \frac1{\wt q_1} = \frac{\al/(1-s)}{1} + \frac{1-\al/(1-s)}{\wt q}, \qquad \frac1{\wt r_1} = \frac{\al/(1-s)}{2} + \frac{1-\al/(1-s)}{\wt r}   .  
\label{Z1b}
 \end{align}

\noi
By applying \eqref{interpolation}, the product estimate \eqref{bilinear-}
 in Lemma~\ref{LEM:bilin} (with sufficiently small $\al > 0$),
 and Lemma~\ref{LEM:FC}, 
 there exists $\ta > 0$ such that 
\begin{align}
\begin{split}
\Big\| \Im\big( \U_N e^{i\be v_N}\big)  \Big\|_{N^s(T)}  
& \les \Big\| \Im\big( \U_N e^{i\be v_N}\big)  \Big\|_{L^{\wt q_1}_T W^{-\alpha, \wt r_1}_x}\\
& \les T^\ta \big\| \U_N \big\|_{L^{q_2}_T W^{-\alpha,\infty}_x} \|  e^{i\be v_N}  \|_{L^{\wt q_2}_T W^{\alpha,\wt r_2}_x}
\\
& \les T^\ta \big\| \U_N \big\|_{L^{q_2}_T W^{-\alpha,\infty}_x}
\Big(1+\be\|  v_N  \|_{L^{\wt q_2}_T W^{\alpha,\wt r_2}_x}\Big)
\end{split}
\label{Z2}
\end{align}

\noi
for any finite $\wt q_2 >\wt q_1$ and $\wt r_2> \wt r_1$ with $\frac{1}{\wt q_1} > \frac{1}{q_2}+\frac{1}{\wt q_2}$.

It follows from \eqref{Z1a} and \eqref{Z1b} that by taking $\al \to 0$, 
we can take 
$(q_1, r_1)$ (and $(\wt q_1, \wt r_1)$, respectively)
arbitrarily close to $(q, r)$ (and to $(\wt q, \wt r)$, respectively).
Moreover,  by taking $\al > 0$ sufficiently small, 
 we can also take $(\wt q_2,\wt r_2)$ arbitrarily close to $(\wt q_1,\wt r_1)$.
 Hence, from~\eqref{bd-pairs}, we can guarantee
 \begin{equation}\label{bd-pairs2}
 2\wt q_2 < q_1 \qquad \text{ and } \qquad 2\wt r_2< r_1
\end{equation}
 
 \noi
 by taking $\al > 0$ sufficiently small.
Therefore, from \eqref{Z1} and  \eqref{Z2} with \eqref{interpolation} and \eqref{bd-pairs2}, we obtain
\begin{equation}\label{Z3}
\big\| \Phi^N(v_N) \big\|_ {X^s(T)} \les 
\|(u_0, u_1)\|_{\H^s} +
T^\ta \big\| \U_N \big\|_{L^{q_2}_T W^{-\alpha,\infty}_x}\Big(1+\be\|  v_N  \|_{X^s(T)}\Big).
\end{equation}

Proceeding as in \eqref{Z1} and \eqref{Z2}, 
we have 
\begin{align}
\big\| \Phi^N(v_N)-\Phi^N(w_N) \big\|_ {X^s(T)} 
\les T^{\ta}\big\| \U_N \big\|_{L^{q_2}_{T} W^{-\alpha,\infty}_x} \|  e^{i\be v_N} -e^{i\be w_N} \|_{L^{\wt q_2}_T W^{\alpha,\wt r_2}_x}.
\label{Z4}
\end{align}

\noi
As for the last factor in \eqref{Z4}, 
by applying  the mean value theorem
with $F(u)= e^{i\be u}$,
 the fractional Leibniz rule \eqref{bilinear+} in Lemma~\ref{LEM:bilin}, 
  and the fractional chain rule (Lemma~\ref{LEM:FC}), 
  we have
\begin{align}
\begin{split}
&\|  e^{i\be v_N} -e^{i\be w_N} \|_{L^{\wt q_2}_T W^{\alpha,\wt r_2}_x}\\ 
&\quad= \bigg\|  (v_N-w_N)\int_0^1F'(\tau v_N + (1-\tau)w_N)d\tau\bigg\|_{L^{\wt q_2}_T W^{\alpha,\wt r_2}_x} \\
&\quad\les \|  v_N-w_N\|_{L^{2\wt q_2}_T W^{\al,2\wt r_2}_x}\int_0^1\| F'(\tau v_N + (1-\tau)w_N)\|_{L^{2\wt q_2}_T W^{\alpha,2\wt r_2}_x}d\tau\\
&\quad\les \be\|  v_N-w_N\|_{L^{2\wt q_2}_T W^{\al,2\wt r_2}_x} 
\Big(1+\be\|  v_N\|_{L^{2\wt q_2}_T W^{\al,2\wt r_2}_x}+\be\|  w_N\|_{L^{2\wt q_2}_T W^{\al,2\wt r_2}_x}\Big).
\end{split}
\label{Z5}
\end{align}

\noi
Hence, from \eqref{Z4} and \eqref{Z5}, we obtain 
\begin{equation}\label{Z6}
\begin{split}
\big\| \Phi^N(v_N)-\Phi^N(w_N) \big\|_ {X^s(T)} 
&\les T^{\ta}\be(1+\be R)\big\| \U_N \big\|_{L^{ q_2}_{T} W^{-\alpha,\infty}_x}
\|  v_N-w_N\|_{X^s(T)}
\end{split}
\end{equation}

\noi
for $v_N, w_N \in B_R \subset X^s(T)$.

Let $\al > 0$ be sufficiently small as above.
Then, Proposition \ref{PROP:Ups} states that, 
given any finite $p\geq 1$,  
$\{\U_N\}_{N \in \N}$ is uniformly bounded in 
 $L^p(\O;L^{q_2}([0,T_0];W^{-\al,\infty}(\T^2)))$, 
  provided that 
\begin{align}
0 < T_0 < \frac{8\pi \al}{\be^2}.
\label{Z7}
\end{align}

\noi
Fix $T_0 > 0$ satisfying \eqref{Z7}.
Given $N \in \N$ and $\ld > 0$, define $\O_{N, \ld}$ by 
\[ \O_{N, \ld} = \Big\{\o \in \O: \big\| \U_N \big\|_{L^{ q_2}_{T_0} W^{-\alpha,\infty}_x}< \ld \Big\}.\]

\noi
On $\O_{N, \ld}$, 
it follows from \eqref{Z3} and \eqref{Z6} that 
$\Phi^N$
is a contraction on the ball $B_R\subset X^s(T)$, 
where $R\sim \| (u_0, u_1)\|_{\H^s}$
and $T = T(R, \ld, \be)
= T\big(\| (u_0, u_1)\|_{\H^s}, \ld, \be\big) > 0$
such that $T \leq T_0$.
Note that we can choose $T>0$ such that
\begin{align}
T \sim_{R, \be} (1+\ld)^{-\kk}
\label{Z7a}
\end{align}

\noi
for some $\kk > 0$ and any $\ld \gg 1$.

By Proposition \ref{PROP:Ups} and Chebyshev's inequality, 
we have 
\begin{align}
\sup_{N \in \N} P\big(  \O_{N, \ld}^c\big) = o(1)
\label{Z8}
\end{align}

\noi
as $\ld \to \infty$.
This proves  local well-posedness
of \eqref{Duhamel}
uniformly in $N \in \N$.
Here, the uniformity refers to the fact
that, given any small $T>0$,  we have a uniform (in $N$) control 
(i.e.~a uniform lower bound in terms of $T$) on 
the probabilities of the sets
$\O_N(T) = \O_{N, \ld(T)}$, where
well-posedness of \eqref{Duhamel} holds on $[0, T]$, 
thanks to \eqref{Z7a} and \eqref{Z8}.

Let $\U$ be the limit of $\U_N$ in 
 $L^p(\O;L^{q_2}([0,T_0];W^{-\al,\infty}(\T^2)))$
 constructed in Proposition~\ref{PROP:Ups}. 
Define  $\O_{\ld}$  by 
\begin{align}
 \O_{ \ld} = \Big\{\o \in \O: \| \U \|_{L^{ q_2}_{T_0} W^{-\alpha,\infty}_x}< \ld \Big\}.
\label{XY1}
 \end{align}

\noi
Then, by repeating the argument above, 
we see that 
the limiting equation \eqref{v1} for $v$, 
written in the Duhamel formulation as
\begin{equation}\label{Duhamel2}
v(t) = \dt S(t)u_0 + S(t)u_1 -\int_0^tS(t-t')\Im\big(\U e^{i\be v}\big)(t')dt', 
\end{equation}

\noi
is well-posed on the time interval $[0, T]$, 
where $T = T(R, \ld, \be) < T_0$
satisfies \eqref{Z7a}.
In view of Proposition~\ref{PROP:Ups}, 
we then conclude that 
there exists an almost surely positive stopping time 
\begin{align}
\tau
= \tau \big(\| (u_0, u_1)\|_{\H^s},  \be\big)
\sim C(\| (u_0, u_1)\|_{\H^s}, \be )
\big(1 + \| \U \|_{L^{ q_2}_{T_0} W^{-\alpha,\infty}_x}\big)^{-\kk}
\label{XY2}
\end{align}

\noi
 such that 
the limiting equation \eqref{v1} for $v$
is well-posed on the time interval $[0, \tau]$.

In the following, fix  sufficiently small $T = 
T\big(\| (u_0, u_1)\|_{\H^s},  \be\big) > 0$, satisfying \eqref{Z7}
(with $T_0$ replaced by $T$)
and
set  $\O(T) = \{ \tau \geq T\}$, 
where $\tau$ is defined in \eqref{XY2}.
Namely, for any $\o \in \O(T)$, 
the limiting equation~\eqref{Duhamel2} is well-posed on $[0, T]$
by  the argument above.
Moreover, 
in view of  \eqref{XY1} and \eqref{XY2}, 
we may assume that $\O(T) = \O_\ld$
for some $\ld = \ld(T) >0$ satisfying~\eqref{Z7a}.
Note that $P(\O(T)) > 0$ for any sufficiently small $ T>0$
and $P(\O(T)) \to 1$ as $T \to 0$.
In the following, we work with the conditional probability $P_T$
given the event $\O(T)$, 
defined by 
\[ P_T (A) = P(A|\O(T)) = \frac{P(A \cap \O(T))}{P(\O(T))}.\]

We first check that the truncated dynamics \eqref{Duhamel}
is well-posed on $[0, T]$
outside a set of $P_T$-probability $o(1)$ as $N \to \infty$.
Let $N \in \N$. By defining a set $\Si_N$ by 
\[ \Si_{N} = \Big\{\o \in \O: \big\| \U_N - \U \big\|_{L^{ q_2}_{T_0} W^{-\alpha,\infty}_x}\leq 1 \Big\},\]

\noi
we have 
\[
\O(T) \cap \Si_N =  \O_\ld \cap \Si_N \subset \O_{N, \ld+1} .\]

\noi
Without loss of generality,
 we may assume that
the equation \eqref{Duhamel}
is also well-posed on $[0, T]$
for any $\o \in \O(T) \cap \Si_N$.
Furthermore, since $\U_N$ converges in probability to $\U$
in  $L^{q_2}([0,T];W^{-\al,\infty}(\T^2))$, 
we have $P(\Si_N^c) \to 0 $ as $N \to \infty$.
Namely, we have
\begin{align}
P_T(\O(T) \cap \Si_N) \too 1, 
\label{Z9}
\end{align}

\noi
as $N \to \infty$.
This verifies well-posedness of the truncated dynamics \eqref{Duhamel}
on the time interval $[0, T]$
asymptotically $P_T$-almost surely
in the sense of \eqref{Z9}.

Given $(u_0, u_1) \in \H^s(\T^2)$ and $ \U_0 \in L^{q_2}([0,T];W^{-\al,\infty}(\T^2)))$,
define a map $\Phi = \Phi_{(u_0, u_1),  \U_0}$ by
\begin{equation}\label{Duhamelx}
\Phi(v)(t) = \dt S(t)u_0 + S(t)u_1 -\int_0^tS(t-t')\Im\big( \U_0 e^{i\be v}\big)(t')dt'.
\end{equation}

\noi
Then, a slight modification of the analysis presented above
shows that 
the map: 
\[\big((u_0, u_1) ,  \U_0\big) 
\in \H^s(\T^2)\times  L^{q_2}([0,T];W^{-\al,\infty}(\T^2)))
\too v \in X^s(T)\]

\noi
is continuous, where $v = \Phi(v)$ is the unique fixed point for $\Phi$
and $0 < T \leq T_0$ is sufficiently small.
From this observation
and convergence in probability of $\U_N$ to $\U$
deduced from Proposition \ref{PROP:Ups}, we conclude that 
the solution $v_N$ to \eqref{Duhamelx}
converges in probability with respect to the conditional probability $P_T$
to $v$ in $X^s(T)$.\footnote
{Note that the truncated equation  \eqref{Duhamel}
may not be well-posed on the time interval $[0, T]$, $P_T$-almost surely.
This, however, does not cause a problem since, as verified in \eqref{Z9},
the truncated equation \eqref{Duhamel} is well-posed on $[0, T]$
asymptotically $P_T$-almost surely.}

Finally, recalling the decompositions 
\[ u_N=\Psi_N + v_N 
\qquad \text{and} \qquad u = \Psi + v\]

\noi
and from  Lemma \ref{LEM:psi} that $\Psi_N$ converges $P$-almost surely 
(and hence $P_T$-almost surely)
to $\Psi$ in 
 $ C([0,T];W^{-\eps,\infty}(\T^2))$, 
we conclude that 
$u_N$ converges to $u$, 
in $ C([0,T];W^{-\eps,\infty}(\T^2))$, 
in probability with respect to the conditional probability $P_T$.
This completes the proof of Theorem \ref{THM:main}.

\subsection{Triviality of the unrenormalized model}
\label{SUBSEC:main2}

We conclude this paper by establishing a triviality result
for the unrenormalized model (Proposition \ref{PROP:triviality}),
whose proof follows from a small modification of the proof of Theorem \ref{THM:main}.

The main idea is to follow the idea from \cite{HRW, OPT1, OOR}
and artificially introduce a 
renormalization constant $\g_N(t)$ in \eqref{gN}.
Note that unlike the previous work  \cite{HRW, OPT1, OOR}, 
our renormalization constant appears in a multiplicative manner,
which makes our analysis particularly simple.
Start with the truncated equation \eqref{SSG2}
and rewrite it as
\[\dt^2u_N + (1-\Dl) u_N +\g_N^{-1} \Im\big(\g_N e^{i\be u_N}\big) = \P_N\xi.\]

\noi
With the decomposition
\begin{align}
u_N = \Psi_N + v_N,
\label{Z11a} 
\end{align}

\noi
it follows from \eqref{Ups} and \eqref{gN} that 
$\g_N e^{i\be u_N} = \U_N e^{i \be v_N}$.
Then, 
we may repeat the analysis presented in the previous subsection
and establish local well-posedness for 
 the equation satisfied by $v_N$:
\begin{equation}\label{uvN}
\begin{cases}
\dt^2v_N + (1-\Dl)v_N + \g_N^{-1} \Im\big(\U_N e^{i\be v_N}\big)=0\\
(v_N,\dt v_N)\big|_{t=0}=(u_0,u_1).
\end{cases}
\end{equation}

\noi
Noting that $0 \leq \g_N^{-1}(t) \leq 1$, 
we once again have 
 a uniform (in $N$) control 
on 
the probabilities of the sets
$\O_N(T) = \O_{N, \ld(T)}$, where
well-posedness of~\eqref{uvN} holds on $[0, T]$.
Moreover, we have
\begin{align}
\|  v_N  \|_{X^s(T)} \les \|(u_0, u_1)\|_{\H^s}
\label{Z10}
\end{align}

\noi
on $\O_N(T)$.

By  writing \eqref{uvN} in the  Duhamel formulation:
\[v_N(t)= \dt S(t)u_0 + S(t)u_1 -\int_0^tS(t-t')\g_N^{-1}(t')\Im\big(\U_Ne^{i\be v_N}\big)(t')dt',\]

\noi
we see from \eqref{Z1} and  \eqref{Z2} with \eqref{Z10} that 
\begin{align}
\begin{split}
\|v_N-\dt S(t)u_0 - S(t)u_1\|_{X^s(T)} 
&\les T^\ta \big\|\g_N^{-1}(t) \U_N \big\|_{L^{q_2}_T W^{-\alpha,\infty}_x}\Big(1+\be\|  v_N  \|_{X^s(T)}\Big)\\
&\les \|\g_N^{-1}(t) \|_{L^{\tfrac1{\dl}}_T}\big\|\U_N \big\|_{L^{\frac{q_2}{1-\dl q_2}}_{T_0} W^{-\alpha,\infty}_x}\\
& \quad 
\times \Big(1+\be \| (u_0,u_1)\|_{\H^s}\Big)
\end{split}
\label{Z11}
\end{align}

\noi
for any small $\dl>0$.
From \eqref{gN} with \eqref{sig} and  Lemma~\ref{LEM:covar}, we have
\begin{align}
\|\g_N^{-1}(t) \|_{L^{\tfrac1{\dl}}_T} \les \| e^{-\frac{\be^2t}{8\pi}\log N}\|_{L^{\tfrac1{\dl}}_T}\leq C(\be,\dl)\big(\log N\big)^{-1} \too 0,
\label{Z12}
\end{align}

\noi
as $N \to \infty$.
Then, it follows
from 
\eqref{Z11}, \eqref{Z12}, 
 Proposition \ref{PROP:Ups},
 and Chebyshev's inequality
 that 
  $v_N -\dt S(t) u_0 - S(t)u_1$ converges in probability to 0 in $X^s(T)$ as $N \to \infty$.
  Recalling the decomposition \eqref{Z11a}
  and the almost sure convergence of  $\Psi_N$ to  $\Psi$ 
in    $ C([0,T];W^{-\eps,\infty}(\T^2))$
  (Lemma~\ref{LEM:psi}), 
we conclude that $u_N$ converges 
 in probability to $ \dt S(t)u_0 + S(t) u_1 + \Psi$, which is the unique solution to the linear stochastic wave equation \eqref{LSW}
 with initial data $(u_0, u_1)$.
This completes the proof of  Proposition \ref{PROP:triviality}.

\begin{acknowledgment}

\rm 
T.O.~and T.R.~were supported by the European Research Council (grant no.~637995 ``ProbDynDispEq'').
P.S.~was partially supported by NSF grant DMS-1811093.
The authors would like to thank the anonymous referees for the helpful comments.

\end{acknowledgment}

\end{document}